\newtheorem{theorem}{Theorem}[section] 
\newtheorem{lemma}[theorem]{Lemma}     
\newtheorem{corollary}[theorem]{Corollary}
\newtheorem{definition}[theorem]{Definition}
\newtheorem{example}[theorem]{Example}
\begin{document}

\title{Semispectral Measures and Feller Markov Kernels}
\author{Roberto Beneduci\thanks{e-mail rbeneduci@unical.it}\\
{\em Dipartimento di Matematica,}\\
{\em Universit\`a della Calabria,}\\
and\\
{\em Istituto Nazionale di Fisica Nucleare, Gruppo c. Cosenza,}\\
}
\date{}
\maketitle



\maketitle

\begin{abstract}
We give a characterization of commutative semispectral measures by means of Feller and Strong Feller Markov kernels. In particular:

\begin{itemize}
\item we show that a semispectral measure $F$ is commutative if and only if  there exist a self-adjoint operator $A$ and a Markov kernel $\mu_{(\cdot)}(\cdot):\Gamma\times\mathcal{B}(\mathbb{R})\to[0,1]$, $\Gamma\subset\sigma(A)$, $E(\Gamma)=\mathbf{1}$, such that 
$$F(\Delta)=\int_{\Gamma}\mu_{\Delta}(\lambda)\,dE_{\lambda},$$
\noindent
 and $\mu_{(\Delta)}$ is continuous for each $\Delta\in R$ where, $R\subset\mathcal{B}(\mathbb{R})$ is a ring which generates the Borel $\sigma$-algebra of the reals $\mathcal{B}(\mathbb{R})$. Moreover, $\mu_{(\cdot)}(\cdot)$ is a Feller Markov kernel and separates the points of $\Gamma$.

\item we prove that $F$ admits a strong Feller Markov kernel $\mu_{(\cdot)}(\cdot)$, if and only if $F$ is uniformly continuous. Finally, we prove that if $F$ is absolutely continuous with respect to a regular finite measure $\nu$ then, it admits a strong Feller Markov kernel.
\end{itemize}
The mathematical and physical relevance of the results is discussed giving a particular emphasis to the connections between $\mu$ and the imprecision of the measurement apparatus.
\end{abstract}


\section{Introduction}
A real semispectral measure (or Positive operator Valued measure) is a map $F:\mathcal{B}(\mathbb{R})\to\mathcal{L}_s^+(\mathcal{H})$ from the Borel $\sigma$-algebra of the reals to the space of positive self-adjoint operators on a Hilbert space $\mathcal{H}$. If, $F(\Delta)$ is a projection operator for each $\Delta\in\mathcal{B}(\mathbb{R})$, $F$ is called spectral measure (or Projection Valued measure). Therefore, the set of spectral measures is a subset of the set of semispectral measures. Moreover, spectral measures are in one-to-one correspondence with self-adjoint operators (spectral theorem) \cite{Reed} and are used in standard quantum mechanics to represent quantum observables. It was pointed out \cite{Ali,Bush,Davies,Holevo1,Prugovecki,Schroeck} that semispectral measures are more suitable than spectral measures in representing quantum observables. The quantum observables described by semispectral measures are called generalized observables or unsharp observables and play a key role in quantum information theory, quantum optics, quantum estimation theory \cite{Bush,Helstrom,Holevo1,Schroeck1} and in the phase space formulation of quantum mechanics \cite{Schroeck1,B9,B10}. It is then natural to ask what are the relationships between semispectral and spectral measures. A clear answer can be given in the commutative case \cite{Ali,B0,B1,B2,B3,B4,B5,B6,B7,B8,Holevo,P}. Indeed \cite{B1,P}, a real positive semispectral measure $F$ is commutative if and only if there exist a bounded self-adjoint operator $A$ and a Markov kernel (transition probability) $\mu_{(\cdot)}(\cdot):\sigma(A)\times\mathcal{B}(\mathbb{R})\to[0,1]$ such that 
$$F(\Delta)=\int_{\sigma(A)}\mu_{\Delta}(\lambda)\,dE_{\lambda}$$
\noindent
 where, $E$ is the spectral measure corresponding to $A$. 
In other words, $F$ is a smearing of the spectral measure $E$ corresponding to $A$. 

\noindent
As an example we can consider the following unsharp position observable 
\begin{align}\label{PositionExample}
\langle\psi,Q^f(\Delta)\psi\rangle&:=\int_{[0,1]}\mu_{\Delta}(x)\,d\langle\psi,Q_x\psi\rangle,\quad\Delta\in\mathcal{B}(\mathbb{R}),\quad\psi\in L^2([0,1]),\\
\mu_{\Delta}(x)&:=\int_{\mathbb{R}}\chi_{\Delta}(x-y)\, f(y)\,dy,\quad x\in[0,1]\notag
\end{align}
\noindent
where, $f$ is a positive, bounded, Borel function such that $f(y)=0$, $y\notin [0,1]$, $\int_{[0,1]} f(y)dy=1$, and $Q_x$ is the spectral measure corresponding to the position operator
\begin{align*}
Q:L^2([0,1])&\to L^2([0,1])\\
\psi(x)&\mapsto Q\psi:=x\psi(x)
\end{align*}
\noindent
We recall that $\langle\psi,Q(\Delta)\psi\rangle$ is interpreted as the probability that a perfectly accurate measurement (sharp measurement) of the position gives a result in $\Delta$.
\noindent
Then, a possible interpretation of equation (\ref{PositionExample}) is that $Q^f$ is a randomization of $Q$. Indeed \cite{Prugovecki}, the outcomes of the measurement of the position of a particle depend on the measurement imprecision\footnote{There are other possible interpretations of the randomization. For example, it could be due to the existence of a no-detection probability depending on hidden variables \cite{Garola}.} so that, if the sharp value of the outcome of the measurement of $Q$ is $x$ then the apparatus produces with probability $\mu_{\Delta}(\lambda)$ a reading in $\Delta$. 

It is worth noticing that (see example \ref{example2} in section 5) the Markov kernel 
$$\mu_{\Delta}(x):=\int_{\mathbb{R}}\chi_{\Delta}(x-y)\, f(y)\,dy,\quad x\in[0,1]$$
\noindent
 in equation (\ref{PositionExample}) above is such that the function $x\mapsto\mu_{\Delta}(x)$ is continuous for each $\Delta\in\mathcal{B}(\mathbb{R})$. The continuity of $\mu_{\Delta}$ means that if two sharp values $x$ and $x'$ are very close to each other then, the corresponding random diffusions are very similar, i.e., the probability to get a result in $\Delta$ if the sharp value is $x$ is very close to the probability to get a result in $\Delta$ if the sharp value is $x'$. That is quite common in important  physical applications and  seems to be reasonable from the physical viewpoint. It is then natural to look for general conditions which ensure the continuity of $\lambda\mapsto\mu_{\Delta}$. That is one of the aims of the present work. What we prove is that, in general, the continuity does not hold for all the Borel sets $\Delta$ but only for a ring of subsets which generates the Borel $\sigma$-algebra of the reals. (Anyway, that is sufficient to prove the weak convergence of $\mu_{(\cdot)}(x)$ to $\mu_{(\cdot)}(x')$.) We also prove that the continuity for each Borel set is equivalent to the uniform continuity of $F$ which in its turn is equivalent to require that the smearing in equation (\ref{PositionExample}) can be realized by a strong Feller Markov kernel.

 It is our opinion that the continuity of $\mu_{\Delta}$ over a ring $\mathcal{R}$ which generates the Borel $\sigma$-algebra of the reals could be helpful in dealing with problems connected to the characterization of functions of the kind 
$$G_f(x)=\int f(t)\,d\mu_{t}(x).$$ 
\noindent
A similar (but less general) problem  arises in Ref. \cite{B6} where the relationships between  Naimark extension theorem and the characterization of commutative semispectral measures as smearing of spectral measures are analyzed.
That is a second motivation for the analysis of the continuity properties of $\mu_{\Delta}$. 

The results outlined above are contained in the two main theorems of the present work. 

\noindent
The first is a stronger characterization of commutative semispectral measures. In particular, we show (see theorems \ref{weak}) that a semispectral measure is commutative if and only if  there exist a spectral measure $E$ and a  Markov kernel $\mu_{(\cdot)}(\cdot):\Gamma\times\mathcal{B}(\mathbb{R})\to[0,1]$, $\Gamma\subset\sigma(A)$, $E(\Gamma)=\mathbf{1}$, such that 

\begin{equation}\label{Feller}
 F(\Delta)=\int_{\Gamma}\mu_{\Delta}(\lambda)\,dE_{\lambda}
\end{equation}

\noindent
and $\mu_{\Delta}(\cdot)$ is continuous for each $\Delta\in R$ where, $R\subset\mathcal{B}(\mathbb{R})$ is a ring which generates the Borel $\sigma$-algebra of the reals $\mathcal{B}(\mathbb{R})$. It turns out that $\mu_{(\cdot)}(\cdot):\Gamma\times\mathcal{B}(\mathbb{R})\to[0,1]$ is a Feller Markov kernel \cite{Maslowski,Revuz}. Therefore, $F$ is commutative if and only if there exists a Feller Markov kernel $\mu$ such that equation (\ref{Feller}) is satisfied.

\noindent 
We also prove that the family of functions $\{\mu_{\Delta}\}_{\Delta\in\mathcal{B}(\mathbb{R})}$ separates the points of $\sigma(A)$ up to a null set (see theorems \ref{separate}, and \ref{weak}). In other words, the probability measures $\mu_{(\cdot)}(x)$ and $\mu_{(\cdot)}(x')$ which represent the randomizations corresponding to the sharp values $x$ and $x'$ are different. 


The second theorem is a characterization of the semispectral measures which admit a strong Feller Markov kernel, i.e., a Markov kernel $\mu$ such that the function $\lambda\mapsto\mu_{\Delta}(\lambda)$ is continuous for each $\Delta\in\mathcal{B}(\mathcal{R})$. In particular, we prove (see theorem \ref{uni}) that a semispectral measure $F$ admits a strong Feller Markov kernel if and only if it is uniformly continuous. As an example, we develop the details for the unsharp position observable defined in equation (\ref{PositionExample}) above. Finally, we prove (see section 6) that a semispectral measure $F$ which is absolutely continuous with respect to a regular finite measure $\nu$ is uniformly continuous (theorem \ref{abs}). We give some examples of absolutely continuous semispectral measures (see example \ref{PositionExample3}) and analyze the unsharp position observable which is obtained as the marginal of a phase space observable (see section 6.1).
\section{Some preliminaries about Semispectral measures}
\noindent
In what follows, we denote by $\mathcal{B}(\mathbb{R})$ and $\mathcal{B}([0,1])$ the Borel $\sigma$-algebra of $\mathbb{R}$ and [0,1] respectively, by $\textbf{0}$ and $\textbf{1}$ the null and the identity operators, by $\mathcal{L}_s(\mathcal{H})$ the space of all bounded self-adjoint linear operators acting in a Hilbert space $\mathcal{H}$ with scalar product $\langle\cdot,\cdot\rangle$, by $\mathcal{F}(\mathcal{H})=\mathcal{L}_s^+(\mathcal{H})$ the subspace of all positive, bounded self-adjoint operators on $\mathcal{H}$, by $\mathcal{E}(\mathcal{H})\subset\mathcal{F}(\mathcal{H})$ the subspace of all projection operators on $\mathcal{H}$. We use the symbols POVM and PVM to denote semispectral measures and spectral measures respectively.

\begin{definition}
\label{POV}
A Semispectral measure or Positive Operator Valued measure (for short, POVM) is a map $F:\mathcal{B}(\mathbb{R})\to\mathcal{
F}(\mathcal{H})$
such that:
    \begin{equation*}
    F\big(\bigcup_{n=1}^{\infty}\Delta_n\big)=\sum_{n=1}^{\infty}F(\Delta_n).
    \end{equation*}
    \noindent 
 where, $\{\Delta_n\}$ is a countable family of disjoint
    sets in $\mathcal{B}(\mathbb{R})$ and the series converges in the weak operator topology. It is said to be normalized if 
\begin{equation*}   
    F(\mathbb{R})={\bf{1}}
\end{equation*}
\end{definition}    
\begin{definition}
    A POVM is said to be commutative if
    \begin{equation}
    \big[F(\Delta_1),F(\Delta_2)\big]={\bf{0}},\,\,\,\,\forall\,\Delta_1\,,\Delta_2\in\mathcal{B}(\mathbb{R}).
    \end{equation}
    \end{definition}

   \begin{definition}
   A POVM is said to be orthogonal if
    \begin{equation}
    F(\Delta_1)F(\Delta_2)={\bf{0}}\,\,\,\hbox{if}\,\,\Delta_1\cap\Delta_2=
    \emptyset.
    \end{equation}
\end{definition}
\begin{definition}
A Spectral measure or Projection Valued measure (for short, PVM) is an orthogonal, normalized POVM.
\end{definition}
\noindent
It is simple to see that for a PVM $E$, we have $E(\Delta)=E(\Delta)^2$, for any $\Delta \in \mathcal{B}({\mathbb R})$. Then, $E(\Delta)$ is a projection operator for every $\Delta\in\mathcal{B}(\mathbb{R})$, and the PVM is a map $E:\mathcal{B}(\mathbb{R})\to\mathcal{E}(\mathcal{H})$.

\noindent
In quantum mechanics, non-orthogonal normalized POVM are also called \textbf{generalised} or \textbf{unsharp} observables and PVM \textbf{standard} or \textbf{sharp} observables. 

\noindent
In what follows, we shall always refer to real normalized POVM and we shall use the term ``measurable'' for the Borel measurable functions.
For any vector $x\in\mathcal{H}$ the map
$$\langle F(\cdot)x,x\rangle \,:\,\mathcal{B}({\mathbb R})\to {\mathbb R} ,
\qquad
\Delta \mapsto \langle F(\Delta)x,x\rangle,$$
is a Lebesgue-Stieltjes measure. There exists a one-to-one correspondence \cite{Beals} between POV measures $F$ and POV functions $F_{\lambda}:=F((-\infty,\lambda])$. In the following we will use the symbol $d\langle F_{\lambda}x,x\rangle$ to mean integration with respect to the measure $\langle F(\cdot)x,x\rangle$.
\noindent
We shall say that a measurable function $f:N\subset\mathbb{R}\to f(N)\subset\mathbb{R}$ is almost everywhere (a.e.) one-to-one with respect to a POVM $F$ if it is one-to-one on a subset $N'\subset N$ such that $N-N'$ is a null set with respect to $F$.
We shall say that a function $f:{\mathbb R}\to{\mathbb R}$ is bounded with respect to a POVM $F$, if it is equal to a bounded function $g$ a.e. with respect to $F$, that is, if $f=g$ a.e. with respect to the measure $\langle F(\cdot)x,x\rangle$,  $\forall x \in \mathcal{H}$.
\noindent
For any real, bounded and measurable function $f$ and for any POVM $F$, there is a unique \cite{Berberian} bounded self-adjoint operator $B\in\mathcal{L}_s(\mathcal{H})$ such that
\begin{equation}
\label{6}
\langle Bx,x\rangle=\int f(\lambda)d\langle F_{\lambda}x,x\rangle,\quad\text{for each}\quad x\in\mathcal{H}.
\end{equation}
If equation (\ref{6}) is satisfied, we write $B=\int f(\lambda)dF_{\lambda}$ or $B=\int f(\lambda)F(d\lambda)$ equivalently. 
\noindent
\begin{definition}
The spectrum $\sigma(F)$ of a POVM $F$ is the closed set
$$\left\{\lambda\in{\mathbb R}:\,F\big((\lambda-\delta,\lambda+\delta)\big)\neq
0,\,\forall\delta>0,\,\,\right\}.$$
\end{definition}
\noindent
\noindent
By the spectral theorem \cite{Dunford,Reed}, there is a one-to-one correspondence between PV measures $E$ and self-adjoint operators $B$,
the correspondence being given by
$$B=\int\lambda dE^B_{\lambda}.$$
\noindent
Notice that the spectrum of $E^B$ coincides with the spectrum of the corresponding self-adjoint operator $B$. Moreover, in this case a functional calculus can be developed. Indeed, if $f:{\mathbb R}\to{\mathbb R}$ is a measurable real-valued function, we can define the self-adjoint operator \cite{Reed}
$$f(B)=\int f(\lambda) dE^B_{\lambda}$$
\noindent
where, $E^B$ is the PVM corresponding to $B$. If $f$ is bounded, then $f(B)$ is bounded \cite{Reed}.
\noindent

\noindent
In the following we do not distinguish between PVM and the corresponding self-adjoint operators.


\noindent
Let $\Lambda$ be a subset of $\mathbb{R}$ and $\mathcal{B}(\Lambda)$ the corresponding Borel $\sigma$-algebra.
\begin{definition}
A real Markov kernel is a map $\mu:\Lambda\times\mathcal{B}(\mathbb{R})\to[0,1]$ such that,
\begin{itemize}
\item[1.] $\mu_{\Delta}(\cdot)$ is a measurable function for each $\Delta\in\mathcal{B}(\mathbb{R})$,
\item[2.] $\mu_{(\cdot)}(\lambda)$ is a probability measure for each $\lambda\in \Lambda$.
\end{itemize}
\end{definition}
\begin{definition}
Let $\nu$ be a measure on $\Lambda$. A map $\mu:\Lambda\times\mathcal{B}(\mathbb{R})\to[0,1]$ is a weak Markov kernel with respect to $\nu$ if:
\begin{itemize}
\item[1.] $\mu_{\Delta}(\cdot)$ is a measurable function for each $\Delta\in\mathcal{B}(\mathbb{R})$,
\item[2.] $0\leq\mu_{\mathbb{R}}(\lambda)\leq 1$,\quad $\nu-a.e.$,
\item[3.]$\mu_{\mathbb{R}}(\lambda)=1$, $\mu_{\emptyset}(\lambda)=0$,\quad $\nu-a.e.$,
\item[4.] for any sequence $\{\Delta_i\}_{i\in\mathbb{N}}$, $\Delta_i\cap\Delta_j=\emptyset$,  
$$\sum_i\mu_{(\Delta_i)}(\lambda)=\mu_{(\cup_i\Delta_i)}(\lambda),\quad \nu-a.e.$$
\end{itemize}
\end{definition}
\begin{definition}
The map $\mu:\Lambda\times\mathcal{B}(\mathbb{R})\to[0,1]$ is a weak Markov kernel with respect to a PVM $E:\mathcal{B}(\Lambda)\to\mathcal{E(H)}$ if it is a weak Markov kernel with respect to each measure $\nu_x(\cdot):=\langle E(\cdot)\,x,x\rangle$, $x\in\mathcal{H}$. 
\end{definition}
\noindent
In the following, by a weak Markov kernel $\mu$ we mean a weak Markov kernel with respect to a PVM $E$. Moreover the function $\lambda\mapsto\mu_{\Delta}(\lambda)$ will be denoted indifferently by $\mu_{\Delta}$ or $\mu_{\Delta}(\cdot)$.    
\begin{definition}
A POV measure $F:\mathcal{B}(\mathbb{R})\to\mathcal{F(H)}$ is said to be a smearing of a POV measure $E:\mathcal{B}(\Lambda)\to\mathcal{E(H)}$ if there exists a weak Markov kernel $\mu:\Lambda\times\mathcal{B}(\mathbb{R})\to[0,1]$ such that,
\begin{equation*}
F(\Delta)=\int_{\Lambda} \mu_{\Delta}(\lambda)d E_{\lambda}, \,\,\,\,\,\,\,\Delta\in\mathcal{B}(\mathbb{R}).
\end{equation*}
\end{definition}

\begin{example}
In the standard formulation of quantum mechanics, the operator
\begin{align*}
Q:L^2(\mathbb{R})&\to L^2(\mathbb{R})\\
\psi(x)\in L^2(\mathbb{R})&\mapsto Q\psi:=x\psi(x)
\end{align*}
\noindent
is used to represent the position observable. A more realistic description of the position observable of a quantum particle is given by a smearing of $Q$ as, for example, the optimal position semispectral measure
\begin{align*}
F^Q(\Delta)&=\frac{1}{l\,\sqrt{2\,\pi}}\int_{-\infty}^{\infty}\Big(\int_{\Delta}e^{-\frac{(x-y)^2}{2\,l^2}}\,d y\Big)\,dE^Q_x=\int_{-\infty}^{\infty}\mu_{\Delta}(x)\,dE^Q_x
\end{align*}
\noindent
where, 
\begin{equation*}
\mu_{\Delta}(x)=\frac{1}{l\,\sqrt{2\,\pi}}\int_{\Delta}e^{-\frac{(x-y)^2}{2\,l^2}}\,d y 
\end{equation*}
\noindent
defines a Markov kernel and $E^Q$ is the spectral measure corresponding to the position operator $Q$.
\end{example}
\noindent
In the following, the symbol $\mu$ is used to denote both  Markov kernels and weak Markov kernels. The symbols $A$ and $B$ are used to denote self-adjoint operators. 
\begin{definition}
Whenever $F$, $A$, and $\mu$ are such that $F(\Delta)=\mu_{\Delta}(A)$, $\Delta\in\mathcal{B}(\mathbb{R})$, we say that $(F,A,\mu)$ is a von Neumann triplet. 
\end{definition}
\noindent
The following theorem establishes a relationship between commutative semispectral measures and spectral measures and gives a characterization of the former. Other characterizations and an analysis of the relationships between them can be found in Ref.s \cite{Ali,Holevo,Ali5,P1}.
\begin{theorem}[\cite{B1,P}]\label{Cha}
 A semispectral measure $F$ is commutative if and only if there exist a bounded self-adjoint operator $A$ and a Markov kernel (weak Markov kernel) $\mu$ such that $(F,A,\mu)$ is a von Neumann triplet. 
\end{theorem}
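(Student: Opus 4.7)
The sufficiency is essentially immediate from the functional calculus: if $F(\Delta)=\mu_{\Delta}(A)$ for all $\Delta$, then every $F(\Delta)$ lies in the abelian von Neumann algebra generated by $A$, so $[F(\Delta_1),F(\Delta_2)]=\mathbf{0}$, and $\sigma$-additivity of $F$ in the weak operator topology transfers from $\sigma$-additivity of the scalar measure $\mu_{(\cdot)}(\lambda)$ via bounded convergence inside the spectral integral. The content of the theorem is therefore the necessity direction, and that is where I would concentrate effort.

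For necessity, the plan is to first produce the operator $A$. Commutativity of $F$ puts $\{F(\Delta):\Delta\in\mathcal{B}(\mathbb{R})\}$ inside an abelian von Neumann algebra $\mathcal{M}$ acting on $\mathcal{H}$. I would fix a countable ring $\mathcal{R}=\{\Delta_n\}\subset\mathcal{B}(\mathbb{R})$ generating $\mathcal{B}(\mathbb{R})$ and build a bounded self-adjoint operator $A$ (e.g.\ a weighted sum $\sum_n 2^{-n}F(\Delta_n)$, or via the structure theorem for commutative von Neumann algebras) whose bicommutant contains every $F(\Delta_n)$, hence every $F(\Delta)$. The bounded Borel functional calculus for $A$ then produces, for each Borel $\Delta$, a bounded Borel function $f_{\Delta}:\sigma(A)\to\mathbb{R}$, unique up to $E^{A}$-null sets, such that
\begin{equation*}
F(\Delta)=f_{\Delta}(A)=\int_{\sigma(A)}f_{\Delta}(\lambda)\,dE^{A}_{\lambda}.
\end{equation*}
The bounds $\mathbf{0}\le F(\Delta)\le\mathbf{1}$, the values $F(\emptyset)=\mathbf{0}$, $F(\mathbb{R})=\mathbf{1}$, and the $\sigma$-additivity of $F$ translate through the spectral integral (tested against $\langle E^{A}_{(\cdot)}x,x\rangle$ for all $x\in\mathcal{H}$) into the pointwise-a.e.\ statements $0\le f_{\Delta}\le 1$, $f_{\emptyset}=0$, $f_{\mathbb{R}}=1$, and $\sum_i f_{\Delta_i}=f_{\cup_i\Delta_i}$ for disjoint $\{\Delta_i\}$. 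That already exhibits $\mu:=f$ as a weak Markov kernel with respect to $E^{A}$, proving the weak-kernel form of the theorem.

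The main obstacle is upgrading this to a genuine Markov kernel, i.e.\ choosing representatives so that $\Delta\mapsto\mu_{\Delta}(\lambda)$ is an honest Borel probability measure for \emph{every} $\lambda$ in a fixed full-$E^{A}$-measure set. The difficulty is that each a.e.\ identity above carries its own null exceptional set depending on $\Delta$ or on the disjoint sequence $\{\Delta_i\}$ chosen, and the uncountable union of such nulls is not a priori null. My plan is to do all the bookkeeping on the countable generating ring $\mathcal{R}$: discard a single $E^{A}$-null set $N$ outside of which positivity, normalisation, and finite/$\sigma$-additivity of $f_{(\cdot)}(\lambda)$ on $\mathcal{R}$ hold simultaneously; then, for each $\lambda\notin N$, invoke the Carath\'eodory/Kolmogorov extension theorem to extend $f_{(\cdot)}(\lambda)|_{\mathcal{R}}$ to a Borel probability measure $\mu_{(\cdot)}(\lambda)$ on $\mathbb{R}$ (with $\mu_{(\cdot)}(\lambda)$ defined arbitrarily on $N$). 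A monotone-class argument then lifts the identity $F(\Delta)=\int\mu_{\Delta}(\lambda)\,dE^{A}_{\lambda}$ from $\mathcal{R}$ to all of $\mathcal{B}(\mathbb{R})$, and the same $\pi$-$\lambda$ argument propagates measurability of $\lambda\mapsto\mu_{\Delta}(\lambda)$ from $\mathcal{R}$ to $\mathcal{B}(\mathbb{R})$, completing the construction of the von Neumann triplet $(F,A,\mu)$.
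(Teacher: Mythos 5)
The paper does not actually prove Theorem \ref{Cha}: it is imported from \cite{B1,P}, and the nearest in-house argument is the proof of the stronger Theorem \ref{weak}. Your architecture --- sufficiency by functional calculus, necessity by exhibiting a single generator $A$ of the abelian von Neumann algebra $\mathcal{A}^W(F)$, reading off a weak Markov kernel from the Borel functional calculus, and upgrading it by doing the bookkeeping on a countable generating ring --- is exactly the strategy of that proof and of \cite{B1}. But two of your steps are not sound as written. The candidate generator $A=\sum_n 2^{-n}F(\Delta_n)$ does not in general generate $\{F(\Delta_n)\}_n''$: already for commuting projections the base-$2$ weights fail (non-uniqueness of binary expansions lumps together distinct joint spectral atoms), and for effects with continuous spectrum in $[0,1]$ the summands cannot be recovered from the sum at all (e.g.\ multiplication by $x$ and by $1-2x$ on $L^2([0,1])$ give $\tfrac12 F_1+\tfrac14 F_2=\tfrac14\mathbf{1}$). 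The working version of the idea, used in the proof of Theorem \ref{weak}, passes to a countable family of \emph{spectral projections} of the $F(\Delta_n)$ and uses base-$3$ weights, i.e.\ embeds the Gelfand spectrum in $\prod_i\{0,1\}$ and separates points with $\sum_i x_i/3^i$. Your fallback via the generator theorem for abelian von Neumann algebras is legitimate, but it rests on the countable generation of $\mathcal{A}^W(F)$ by $\{F(\Delta)\}_{\Delta\in\mathcal{R}}$, which itself needs the regularity argument of Appendix A.

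The more serious gap is the Carath\'eodory step. To extend $f_{(\cdot)}(\lambda)|_{\mathcal{R}}$ to a Borel measure you need it to be a \emph{premeasure} on $\mathcal{R}$, i.e.\ $\sigma$-additive within the ring, and this is not a countable family of conditions: the collection of countable disjoint decompositions of a ring element by ring elements (equivalently, of sequences in $\mathcal{R}$ decreasing to $\emptyset$) is uncountable, so you cannot obtain it by discarding one null set per identity the way you can for finite additivity on the countable ring. This is precisely the point that Corollary 1 / Proposition 2 of \cite{B1} --- which the paper's own proof of Theorem \ref{weak} simply cites --- is designed to handle. The standard repairs are either to work with the distribution functions $t\mapsto f_{(-\infty,t]}(\lambda)$, $t\in\mathbb{Q}$, impose the countably many a.e.\ monotonicity and limit conditions, and define $\mu_{(\cdot)}(\lambda)$ as the Lebesgue--Stieltjes measure of the right-continuous regularization; or to verify inner regularity by compact sets along a countable monotone family of rational shrinkages, which again reduces to countably many a.e.\ statements. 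Without one of these devices your single exceptional null set $N$ does not exist as described, and the passage from weak Markov kernel to Markov kernel --- the actual content of the parenthetical in the theorem --- remains unproved.
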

\begin{corollary}\label{smearing}
 A semispectral measure $F$ is commutative if and only if it is a smearing of a PV measure $E$ with bounded spectrum.
\end{corollary}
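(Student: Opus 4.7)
The corollary is essentially an immediate corollary of Theorem \ref{Cha} together with the spectral theorem; the argument amounts to translating between the language of von Neumann triplets and the language of smearings.

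For the ``only if'' direction, I would start with a commutative POVM $F$ and apply Theorem \ref{Cha} to obtain a bounded self-adjoint operator $A$ and a (weak) Markov kernel $\mu$ such that $(F,A,\mu)$ is a von Neumann triplet, i.e. $F(\Delta)=\mu_\Delta(A)$ for every $\Delta\in\mathcal{B}(\mathbb{R})$. Let $E^A$ be the spectral measure of $A$. Since $A$ is bounded, $\sigma(E^A)=\sigma(A)$ is a compact (hence bounded) subset of $\mathbb{R}$. Using the functional calculus recalled in the preliminaries, one has $\mu_\Delta(A)=\int \mu_\Delta(\lambda)\,dE^A_\lambda$ for each $\Delta$, so
\[
F(\Delta)=\int_{\sigma(A)}\mu_\Delta(\lambda)\,dE^A_\lambda,\qquad \Delta\in\mathcal{B}(\mathbb{R}),
\]
which exhibits $F$ as a smearing of the PVM $E^A$ with bounded spectrum.

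For the ``if'' direction, suppose $F(\Delta)=\int_\Lambda \mu_\Delta(\lambda)\,dE_\lambda$ for some weak Markov kernel $\mu$ and some PVM $E:\mathcal{B}(\Lambda)\to\mathcal{E}(\mathcal{H})$ with bounded spectrum. Because $\sigma(E)$ is bounded, the identity function on $\sigma(E)$ is bounded and the operator $A:=\int \lambda\,dE_\lambda$ is a bounded self-adjoint operator whose spectral measure is $E$. By the definition of functional calculus, $F(\Delta)=\mu_\Delta(A)$, so $(F,A,\mu)$ is a von Neumann triplet. Theorem \ref{Cha} then gives that $F$ is commutative; alternatively one can observe directly that for any $\Delta_1,\Delta_2\in\mathcal{B}(\mathbb{R})$ the operators $F(\Delta_i)=\mu_{\Delta_i}(A)$ are bounded Borel functions of a single self-adjoint operator and hence commute.

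There is no real obstacle here: the only minor care to take is in matching conventions, namely checking that the ``Markov kernel (weak Markov kernel)'' produced by Theorem \ref{Cha} is admissible in the definition of smearing (it is, by definition) and that boundedness of $A$ in Theorem \ref{Cha} corresponds precisely to boundedness of the spectrum of $E^A$ required in the statement of the corollary. Once these identifications are made, both implications reduce to a single invocation of the spectral theorem.
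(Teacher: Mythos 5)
Your argument is correct and is precisely the intended one: the paper states this as an immediate corollary of Theorem \ref{Cha} (with no separate proof), the content being exactly the translation between the von Neumann triplet $F(\Delta)=\mu_\Delta(A)$ and the smearing $F(\Delta)=\int\mu_\Delta(\lambda)\,dE^A_\lambda$, with boundedness of $A$ corresponding to boundedness of $\sigma(E^A)$. Nothing to add.
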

\begin{definition}
The von Neumann algebra generated by the semispectral measure $F$ is the von Neumann algebra generated by the set $\{F(\Delta),\,\Delta\in\mathcal{B}(\mathbb{R})\}$. 
\end{definition}
\begin{definition}
If $A$ and $F$  in theorem \ref{Cha} generate the same von Neumann algebra then $A$ is named the sharp version of $F$.
\end{definition}
\begin{theorem}\label{unique}\cite{B1}
The sharp version $A$ is unique up to almost everywhere bijections. 
\end{theorem}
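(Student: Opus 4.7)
The plan is to exploit the fact that the von Neumann algebra generated by a bounded self-adjoint operator $B$ coincides with its bounded Borel functional calculus $\{h(B): h\in L^\infty(\sigma(B),E^B)\}$, so the hypothesis that two sharp versions generate the same von Neumann algebra as $F$ will force each to be a measurable function of the other.

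Let $A_1$ and $A_2$ be two sharp versions of $F$, with spectral measures $E^{A_1}$ and $E^{A_2}$. By Definition, the von Neumann algebra $\mathcal{M}(F)$ generated by $F$ coincides with the one generated by $A_i$, for $i=1,2$. Since $A_1\in\mathcal{M}(F)=\{h(A_2):h\in L^\infty(\sigma(A_2),E^{A_2})\}$, there is a bounded Borel function $f:\sigma(A_2)\to\mathbb{R}$ such that
$$A_1=f(A_2)=\int_{\sigma(A_2)} f(\lambda)\,dE^{A_2}_{\lambda}.$$
Symmetrically, there is a bounded Borel function $g:\sigma(A_1)\to\mathbb{R}$ with $A_2=g(A_1)$. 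Substituting,
$$A_1=f(A_2)=f(g(A_1))=(f\circ g)(A_1),$$
and by the uniqueness part of the bounded Borel functional calculus applied to $A_1$, this forces $f\circ g=\mathrm{id}$ on $\sigma(A_1)$ outside an $E^{A_1}$-null set $N_1$. In the same way, $g\circ f=\mathrm{id}$ on $\sigma(A_2)$ outside an $E^{A_2}$-null set $N_2$.

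From $g\circ f=\mathrm{id}$ on $\sigma(A_2)\setminus N_2$ I get that $f$ is injective on $\sigma(A_2)\setminus N_2$, hence $f$ is a.e. one-to-one in the sense defined in the paper (with respect to $E^{A_2}$, and consequently with respect to $F$, because $A_2$ and $F$ generate the same von Neumann algebra and therefore have the same null sets). From $f\circ g=\mathrm{id}$ on $\sigma(A_1)\setminus N_1$ I likewise get that $f$ maps $\sigma(A_2)\setminus N_2$ onto a set of full $E^{A_1}$-measure in $\sigma(A_1)$, with $g$ as a two-sided inverse there. Thus $f$ is the required a.e. bijection, and $A_2=g(A_1)=f^{-1}(A_1)$ up to the null sets $N_1,N_2$.

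The only delicate point is the appeal to the uniqueness of the functional calculus in the step $f\circ g=\mathrm{id}$ a.e.; this is justified because the map $h\mapsto h(A_1)$ from $L^\infty(\sigma(A_1),E^{A_1})$ into $\mathcal{L}_s(\mathcal{H})$ is a $*$-isomorphism onto $\mathcal{M}(F)$, so $(f\circ g)(A_1)=\mathrm{id}(A_1)$ forces $f\circ g=\mathrm{id}$ in $L^\infty(\sigma(A_1),E^{A_1})$, i.e.\ outside an $E^{A_1}$-null set. Everything else is bookkeeping with the null sets of $E^{A_1}$, $E^{A_2}$ and $F$, all of which coincide by the sharp-version hypothesis.
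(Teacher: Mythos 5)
The paper does not actually prove Theorem \ref{unique} --- it is quoted from \cite{B1} --- but your argument is correct and is the standard (and presumably the cited) one: each sharp version lies in the von Neumann algebra generated by the other, hence (on a separable Hilbert space, where $W^*(A)=\{h(A):h\ \text{bounded Borel}\}$) each is a bounded Borel function of the other, and the injectivity of the map $h\mapsto h(A_1)$ on $L^\infty(\sigma(A_1),E^{A_1})$ forces $f\circ g$ and $g\circ f$ to equal the identity almost everywhere, so that $f$ is the required a.e.\ bijection. The only implicit ingredients are the separability of $\mathcal{H}$ (needed for the identification of $W^*(A)$ with the bounded Borel functional calculus) and the null-set bookkeeping you already flag, e.g.\ that $g$ takes $E^{A_1}$-a.e.\ values in $\sigma(A_2)\setminus N_2$ because $E^{A_2}(\Delta)=E^{A_1}(g^{-1}(\Delta))$; both are harmless in this setting.
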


\section{On the separation properties of $\mu$}
\noindent
In the following, the symbol $\mathcal{S}$ denotes the family of open intervals in $\mathbb{R}$ with rational end-points. The symbol $\mathcal{R(S)}$ denotes the ring generated by $\mathcal{S}$. Notice that $\mathcal{S}$ is countable. Then, by theorem c, page 24, in Ref. \cite{Halmos}, $\mathcal{R(S)}$ is countable too. Moreover,  $\mathcal{R(S)}$ generates the Borel $\sigma$-algebra $\mathcal{B}(\mathbb{R})$.

A weak Markov kernel $\mu$ such that $(F,A,\mu)$ is a von Neumann triplet, separates the point of $\Gamma\subset\sigma(A)$ if the family of functions $\{\mu_{\Delta}\}_{\Delta\in\mathcal{B}(\mathbb{R})}$ separates the points of $\Gamma$ or, in other words, if the set functions $\{\mu_{(\cdot)}(\lambda)\}_{\lambda\in\Gamma}$ are distinct. It is then natural to ask if in general $\mu$ has that property. The following theorem answers in the positive.
\begin{theorem}\label{separate}
Let $(F,A,\mu)$ be a von Neumann triplet and suppose that $A$ is a sharp version of $F$. Then, there exists a set $\Gamma\subseteq\sigma(A)$, $E^A(\Gamma)=\mathbf{1}$, such that the family of functions $\{\mu_{\Delta}(\cdot)\}_{\Delta\in\mathcal{B}(\mathbb{R})}$ separates the points of $\Gamma$.
\end{theorem}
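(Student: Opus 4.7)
The plan is to combine the sharp-version hypothesis with a Doob--Dynkin type factorization. First, I would introduce the Borel map $\Phi:\sigma(A)\to[0,1]^{\mathcal{R(S)}}$ given by $\Phi(\lambda)=(\mu_\Delta(\lambda))_{\Delta\in\mathcal{R(S)}}$, which is well defined because each $\mu_\Delta$ is measurable. Since any Borel probability measure on $\mathbb{R}$ is determined by its values on the countable ring $\mathcal{R(S)}$ (Carath\'eodory uniqueness), $\Phi(\lambda)=\Phi(\lambda')$ holds if and only if $\mu_{(\cdot)}(\lambda)=\mu_{(\cdot)}(\lambda')$ as measures, equivalently $\mu_\Delta(\lambda)=\mu_\Delta(\lambda')$ for every $\Delta\in\mathcal{B}(\mathbb{R})$. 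Hence showing that $\Phi$ is injective on a set of full $E^A$-measure is exactly what is required.

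The key step exploits the sharpness of $A$. By functional calculus, the von Neumann algebra generated by $A$ is isomorphic to $L^\infty(\sigma(A),E^A)$, with $A$ corresponding to the identity function $\mathrm{id}$ and each $F(\Delta)=\mu_\Delta(A)$ corresponding to the function $\mu_\Delta$. Since $A$ and $F$ generate the same von Neumann algebra, the von Neumann algebra generated by $F$ corresponds under this isomorphism to the $\mathrm{wot}$-closed $*$-subalgebra of $L^\infty$ generated by $\{\mu_\Delta\}_{\Delta\in\mathcal{B}(\mathbb{R})}$. In a commutative $L^\infty$, this subalgebra coincides with $L^\infty(\sigma(A),\sigma(\Phi),E^A)$, the essentially bounded $\sigma(\Phi)$-measurable functions, using that $\sigma(\mu_\Delta:\Delta\in\mathcal{B}(\mathbb{R}))=\sigma(\mu_\Delta:\Delta\in\mathcal{R(S)})=\sigma(\Phi)$ because $\mathcal{R(S)}$ generates $\mathcal{B}(\mathbb{R})$.

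In particular $\mathrm{id}\in L^\infty(\sigma(A),\sigma(\Phi),E^A)$, so there exists a $\sigma(\Phi)$-measurable function $g$ and a Borel set $\Gamma\subseteq\sigma(A)$ with $E^A(\Gamma)=\mathbf{1}$ such that $\lambda=g(\lambda)$ for every $\lambda\in\Gamma$. By the Doob--Dynkin lemma, $g$ factors through $\Phi$, that is $g=h\circ\Phi$ for some Borel $h:[0,1]^{\mathcal{R(S)}}\to\mathbb{R}$. Hence, for $\lambda_1,\lambda_2\in\Gamma$, $\Phi(\lambda_1)=\Phi(\lambda_2)$ forces $\lambda_1=h(\Phi(\lambda_1))=h(\Phi(\lambda_2))=\lambda_2$, so $\Phi$ is injective on $\Gamma$, and a fortiori $\{\mu_\Delta\}_{\Delta\in\mathcal{B}(\mathbb{R})}$ separates the points of $\Gamma$.

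The step I expect to require the most care is the identification, under the functional calculus isomorphism, of the von Neumann algebra generated by $\{F(\Delta)\}$ with $L^\infty$ of the sub-$\sigma$-algebra $\sigma(\Phi)$; this is a standard consequence of the structure of commutative von Neumann algebras, but the measure-theoretic bookkeeping (completion, null sets, passing from the generating ring to the whole Borel $\sigma$-algebra) must be handled carefully. Once that identification is in place, the argument collapses to Carath\'eodory's extension theorem and the Doob--Dynkin factorization.
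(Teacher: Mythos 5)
Your argument is correct and reaches the conclusion by a genuinely different route from the paper. The paper works on the C$^*$-algebra side: it passes to the Gelfand spectrum $\Lambda_2$ of the C$^*$-algebra generated by $\{F(\Delta)\}_{\Delta\in\mathcal{R(S)}}$, uses Stone--Weierstrass to see that the continuous functions $\nu_{\Delta_i}=\phi^{-1}(F(\Delta_i))$ separate the points of $\Lambda_2$, writes $A=\int h\,d\widetilde{E}$ for a Borel $h$ (this is where sharpness enters), transfers the separation to $\Gamma:=h(M)$ via the a.e.\ identities $\mu_{\Delta_i}\circ h=\nu_{\Delta_i}$ on a common full-measure set $M$, and then needs Souslin's theorem to certify that $h(M)$ is Borel. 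You instead stay entirely on the measure-theoretic side of the functional calculus: sharpness gives $\mathrm{id}\in\mathcal{A}^W(F)\cong L^{\infty}(\sigma(A),\sigma(\Phi),E^A)$, and Doob--Dynkin factors $\mathrm{id}$ a.e.\ through $\Phi$, which forces injectivity of $\Phi$ on a full-measure set. Your $h\circ\Phi$ is in effect the paper's $h$ read backwards, but the supporting machinery is different: you trade Gelfand--Naimark, Stone--Weierstrass and the descriptive-set-theoretic step for the identification of the von Neumann subalgebra generated by $\{\mu_{\Delta}\}$ with $L^{\infty}$ of the completed sub-$\sigma$-algebra $\sigma(\Phi)$. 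That identification is the one place to be careful, as you note: it needs $\mathcal{H}$ separable (to get a scalar measure in the class of $E^A$ and a normal conditional expectation), and the equality $\sigma(\mu_{\Delta}:\Delta\in\mathcal{B}(\mathbb{R}))=\sigma(\mu_{\Delta}:\Delta\in\mathcal{R(S)})$ deserves a monotone-class argument (and holds only modulo $E^A$-null sets when $\mu$ is merely a weak Markov kernel, which suffices here); this parallels the paper's Appendix A. What your route buys is a cleaner $\Gamma$ (no Souslin argument, no image of a Borel set under an injective Borel map) and separation already by the countable family $\{\mu_{\Delta}\}_{\Delta\in\mathcal{R(S)}}$; what the paper's route buys is the continuity of the $\nu_{\Delta}$ on the Gelfand spectrum, which it reuses later to establish the Feller property in Theorem \ref{weak}.
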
 
\begin{proof}
In the following, $\mathcal{A}^W(F)$ denotes the von Neumann algebra generated by $\{F(\Delta)\}_{\Delta\in\mathcal{B}(\mathbb{R})}$, $O_2:=\{F(\Delta)\}_{\Delta\in\mathcal{R(S)}}$ and $\mathcal{A}^C(O_2)$ is the $C^*$-algebra generated by $O_2$. The von Neumann algebra generated by $\mathcal{A}^C(O_2)$ coincides with $\mathcal{A}^W(F)$ (see appendix A). Moreover, $\mathcal{A}^W(F)=\mathcal{A}^W(A)$ since $A$ is the sharp version of $F$ and generates $\mathcal{A}^W(F)$.
By the Gelfand-Naimark theorem \cite{Dunford,Naimark}, there is a * isomorphism $\phi$ between $\mathcal{A}^C(O_2)$ and the algebra of continuous functions $\mathcal{C}(\Lambda_2)$ where $\Lambda_2$ is the spectrum of $\mathcal{A}^C(O_2)$. Moreover,
\begin{equation*}
f\in \mathcal{C}(\Lambda_2)\mapsto\phi(f)=\int_{\Lambda_2} f(\lambda)\,d\widetilde{E}_{\lambda}
\end{equation*}
\noindent
where, $\widetilde{E}$ is the spectral measure from the Borel $\sigma$ algebra $\mathcal{B}(\Lambda_2)$ to $\mathcal{E(H)}$ whose existence is assured by theorem 1, page 895, in Ref. \cite{Dunford}. The Gelfand-Naimark isomorphism $\phi$ can be extended to a homomorphism between the algebra of the Borel functions on $\Lambda_2$ and the von Neumann algebra $\mathcal{A}^W(F)=\mathcal{A}^W(A)$ generated by $\mathcal{A}^C(O_2)$ (see Ref. \cite{Dixmier1}, page 360, section 3). Therefore, there is a Borel function $h$ such that 
\begin{equation}
A=\int_{\Lambda_2} h(\lambda)\,d\widetilde{E}_{\lambda}
\end{equation} 
\noindent
Let $\{\Delta_i\}_{i\in\mathbb{N}}$ denote an enumeration of the set $\mathcal{R(S)}$. Since $\mathcal{A}^C(O_2)$ is the smallest uniform closed algebra containing $\{F(\Delta_i)\}_{i\in\mathbb{N}}$, $\mathcal{C}(\Lambda_2)$ is the smallest uniform closed algebra of functions containing $\{\nu_{\Delta_i}:=\phi^{-1}(F(\Delta_i))\}_{i\in\mathbb{N}}$. In other words $\{\nu_{\Delta_i}\}_{i\in\mathbb{N}}$ generates $\mathcal{C}(\Lambda_2)$. The Stone-Weierstrass theorem \cite{Dunford} assures that $\{\nu_{\Delta_i}\}_{i\in\mathbb{N}}$ separates the points in $\Lambda_2$. 

\noindent
On the other hand, the fact that  $(F,A,\mu)$ is a von Neumann triplet, implies that, for each $\Delta_i\in\mathcal{R(S)}$, there is a Borel function $\mu_{\Delta_i}$ such that 
\begin{equation*}
\int_{\Lambda_2}\nu_{\Delta_i}(\lambda)\,d\widetilde{E}_{\lambda}=F(\Delta_i)=\mu_{\Delta_i}(A)=\int_{\Lambda_2}\mu_{\Delta_i}(h(\lambda))\,d\widetilde{E}_{\lambda}.
\end{equation*}
\noindent
Then, for each $\Delta_i\in\mathcal{R(S)}$, there is a set $M_i\subset\Lambda_2$, $\widetilde{E}(M_i)=\mathbf{1}$, such that
\begin{equation}\label{a}
\mu_{\Delta_i}(h(\lambda))=\nu_{\Delta_i}(\lambda),\quad \lambda\in M_i.
\end{equation}
\noindent
Let $M:=\cap_{i=1}^{\infty} M_i$. Then, 
\begin{equation*}
\widetilde{E}(M)=\lim_{n\to\infty}\widetilde{E}(\cap_{i=1}^{n} M_i)=\lim_{n\to\infty}\prod_{i=1}^{n}\widetilde{E}(M_i)=\mathbf{1}
\end{equation*}
\noindent
and, for each $i\in\mathbb{N}$,
\begin{align}\label{2}
(\mu_{\Delta_i}\circ h)(\lambda)=\nu_{\Delta_i}(\lambda),\quad\lambda\in M\subseteq \Lambda_2. 
\end{align}
\noindent
Since $\{\nu_{\Delta_i}\}_{i\in\mathbb{N}}$ separates the points in $\Lambda_2$, it separates the points in $M$.  Then, equation (\ref{2}) implies that $\{\mu_{\Delta_i}\}_{i\in\mathbb{N}}$ separates the points in $\Gamma:=h(M)$. 
\noindent
Moreover\footnote{ Notice that $h(M)$ is a Borel set. In order to prove that, we first recall that $\Lambda_2$ is a Polish space (that is, a complete, separable, space \cite{Kuratowski}). Indeed, by theorem 11, page 871, in Ref. \cite{Dunford}, it is homeomorphic to a closed subspace of the Cartesian product $\prod_{i=1}^{\infty}\sigma(F(\Delta_i))$, where $\sigma(F(\Delta_i))$ is a complete separable metric space, and by theorem 2, page 406, and theorem 6, page 156, in Ref. \cite{Kuratowski1}, it is complete and separable. Moreover, $h$ is measurable and injective on $M$. Therefore, Soulsin's theorem (see theorem 9 page 440 and Corollary 1 page 442 in Ref. \cite{Kuratowski}) assures that $h(M)$ is a Borel set.},
\begin{align*}
E^A(\Gamma)=E^A(h(M))=\widetilde{E}[h^{-1}(h(M))]=\mathbf{1}
\end{align*}
\noindent
where, $E^A$ is the spectral measure defined by the relation
$$E^A(\Delta)=\widetilde{E}(h^{-1}(\Delta))$$
\noindent
and such that,
$$A=\int x\,dE^A_x$$
\noindent
while, $h^{-1}(h(M))$ is a Borel set containing $M$.

\noindent
We have proved that the set of functions $\{\mu_{\Delta_i}\}_{i\in\mathbb{N}}$ separates the points of $\Gamma$ and that $E^A(\Gamma)=\mathbf{1}$. In other words,
$$\mu_{(\cdot)}(\lambda)\neq\mu_{(\cdot)}(\lambda'),\quad \lambda\neq\lambda',\,\,\,\,\lambda,\lambda'\in \Gamma.$$ 
\end{proof}

\section{Characterization of Commutative Semi-spectral Measures by means of Feller Markov kernels}
As we have seen in the last section, theorem \ref{Cha} asserts that a semispectral measure $F$ is commutative if and only if there exist a bounded self-adjoint operator $A$ and a weak Markov kernel (Markov kernel) $\mu$ such that $F(\Delta)=\mu_{\Delta}(A)$.  In the present section we study the continuity of the functions $\mu_{\Delta}$. In particular, 
\noindent
we introduce the concept of strong Markov kernel, i.e., a weak Markov kernel $\mu_{(\cdot)}(\cdot):\Lambda\times\mathcal{B}(\mathbb{R})\to[0,1]$ with respect to a PVM $E:\mathcal{B}(\Lambda)\to\mathcal{E(H)}$ such that $\mu_{(\cdot)}(\lambda)$ is a probability measure for each $\lambda\in \Gamma\subset \Lambda$, $E(\Gamma)=\mathbf{1}$. Then, we prove (theorem \ref{weak}) that in order to realize the smearing in corollary \ref{smearing}, one can use a strong Markov kernel $\mu$ such that $\mu_{\Delta}$ is continuous for each $\Delta\in R$, where $R$ is a ring which generates the Borel $\sigma$-algebra of the reals.  It is worth remarking that $\mu_{(\cdot)}(\cdot):\Gamma\times\mathcal{B}(\mathbb{R})\to[0,1]$ is a Feller Markov kernel. Therefore, $F$ is commutative if and only if there exists  a bounded self-adjoint operator $A$ and a Feller Markov kernel $\mu$ such that 
$$F(\Delta)=\int_{\Gamma}\mu_{\Delta}(\lambda)\,dE_{\lambda}.$$
 Moreover, the family of functions $\{\mu_{\Delta}\}_{\Delta\in R}$ separates the points in $\Gamma$ (see theorems \ref{separate} and \ref{weak}).

 In order to prove the main theorem we need the following definitions.
\begin{definition}
Let $E:\mathcal{B}(\Lambda)\to\mathcal{E(H)}$ be a PVM. The map $\mu_{(\cdot)}(\cdot):\Lambda\times\mathcal{B}(\mathbb{R})\to[0,1]$ is a strong Markov kernel with respect to $E$ if it is a weak Markov kernel and there exists a set $\Gamma\subset\Lambda$, $E(\Gamma)=\mathbf{1}$, such that $\mu_{(\cdot)}(\cdot):\Gamma\times\mathcal{B}(\mathbb{R})\to[0,1]$ is a Markov kernel with respect to $E$. A strong Markov kernel is denoted by the symbol $(\mu, E,\Gamma\subset\Lambda)$.
\end{definition} 
 \begin{definition}
A Feller Markov kernel is a  Markov kernel $\mu_{(\cdot)}(\cdot):\Lambda\times\mathcal{B}(\mathbb{R})\to[0,1]$ such that the function 
$$G(\lambda)=\int_{\Lambda}f(t)\,d\mu_t(\lambda),\quad\lambda\in\Lambda$$
\noindent
is continuous and bounded whenever $f$ is continuous and bounded. 
\end{definition}
\begin{theorem}\label{weak}
A real POVM $F:\mathcal{B}(\mathbb{R})\to\mathcal{F(H)}$ is commutative if and only if, there exists a bounded self-adjoint operator $A=\int \lambda\,dE_\lambda$ with spectrum $\sigma(A)\subset[0,1]$ and a strong Markov Kernel  $(\mu, E, \Gamma\subset\sigma(A))$ 
such that:
\begin{enumerate}
\item[1)] $\mu_{\Delta}(\cdot):\sigma(A)\to[0,1]$ is continuous for each $\Delta\in\mathcal{R(S)}$,
\item[2)] $F(\Delta)=\int_{\Gamma}\mu_{\Delta}(\lambda)\,dE_{\lambda},\quad\Delta\in\mathcal{B}(\mathbb{R})$.
\item[3)] $\mu$ separates the points in $\Gamma$.
\end{enumerate}
\noindent
Moreover, $\mu:\Gamma\times\mathcal{B}(\mathbb{R})\to[0,1]$ is a Feller Markov kernel.

\end{theorem}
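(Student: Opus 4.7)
The ``if'' direction is immediate from Corollary \ref{smearing}, since condition~2) presents $F$ as a smearing of the PVM $E$ via the Markov kernel $\mu$.

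For ``only if'', Theorem \ref{Cha} provides a bounded self-adjoint $A$, which by Theorem \ref{unique} may be taken to be the sharp version of $F$ and (after an affine rescaling) assumed to satisfy $\sigma(A)\subset[0,1]$, together with a weak Markov kernel $\mu^0$ such that $F(\Delta)=\mu^0_{\Delta}(A)$. The plan is to rebuild $\mu^0$ into a strong kernel enjoying the continuity in~1) by pulling back, via the Gelfand--Naimark construction from the proof of Theorem \ref{separate}, the continuous Gelfand transforms of the generators $F(\Delta)$ for $\Delta\in\mathcal{R(S)}$.

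Specifically, with $O_2=\{F(\Delta):\Delta\in\mathcal{R(S)}\}$, let $\phi:\mathcal{A}^C(O_2)\to\mathcal{C}(\Lambda_2)$ be the Gelfand--Naimark isomorphism and $\nu_\Delta:=\phi^{-1}(F(\Delta))\in\mathcal{C}(\Lambda_2)$. Extending $\phi$ to $\mathcal{A}^W(A)$ produces a spectral measure $\widetilde{E}$ on $\Lambda_2$, a Borel function $h:\Lambda_2\to\sigma(A)$ with $A=\int h\,d\widetilde{E}$, and a set $M\subset\Lambda_2$ of full $\widetilde{E}$-measure on which $\mu^0_\Delta\circ h=\nu_\Delta$ for every $\Delta\in\mathcal{R(S)}$ simultaneously. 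Stone--Weierstrass separation of $\{\nu_\Delta\}$ on $\Lambda_2$ forces $h|_M$ to be injective, Souslin's theorem makes $\Gamma:=h(M)$ a Borel set with $E^A(\Gamma)=\mathbf{1}$, and the definition of the spectrum forces $\Gamma$ to be dense in $\sigma(A)$ (any relatively open subset disjoint from $\Gamma$ would violate $E^A(\Gamma)=\mathbf{1}$ while carrying positive spectral measure). Set $\mu_\Delta(\lambda):=\nu_\Delta(h^{-1}(\lambda))$ for $\lambda\in\Gamma$ and $\Delta\in\mathcal{R(S)}$, and extend $\mu_{(\cdot)}(\lambda)$ to a probability measure on $\mathcal{B}(\mathbb{R})$ for each $\lambda\in\Gamma$ by the Carath\'eodory construction, countable additivity on $\mathcal{R(S)}$ being inherited from $\mu^0$ through $\mu^0_\Delta\circ h=\nu_\Delta$ on $M$.

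The main obstacle will be verifying that each $\mu_\Delta$ ($\Delta\in\mathcal{R(S)}$) admits a \emph{continuous} extension to all of $\sigma(A)$, not merely a Borel one. On $\Gamma$ it equals $\nu_\Delta\circ h^{-1}$ with $\nu_\Delta$ continuous on the compact space $\Lambda_2$, so the task is to promote this Borel pullback to a continuous function in the subspace topology on $\Gamma\subset\mathbb{R}$ and then extend by density to $\sigma(A)$; this is where the interplay between the Gelfand topology on $\Lambda_2$ and the spectral topology on $\sigma(A)$ has to be handled carefully. Once 1) is in hand, 2) follows from the identity $\mu^0_\Delta\circ h=\nu_\Delta$ on $M$ combined with the Carath\'eodory extension and the definition of $E^A$ via push-forward under $h$; 3) is exactly the output of Theorem \ref{separate} applied to the constructed kernel, since the $\nu_\Delta$ separate the points of $\Lambda_2$ and therefore the $\mu_\Delta$ separate the points of $\Gamma$; and the Feller property is a portmanteau-type consequence, for continuity of $\mu_\Delta$ on the generating ring $\mathcal{R(S)}$ of continuity sets implies weak convergence $\mu_{(\cdot)}(\lambda_n)\to\mu_{(\cdot)}(\lambda)$ whenever $\lambda_n\to\lambda$ in $\Gamma$, whence continuity of $\lambda\mapsto\int f(t)\,d\mu_t(\lambda)$ for every bounded continuous $f$.
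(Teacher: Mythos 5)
The ``if'' direction and items 2) and 3) are handled essentially as in the paper, but your ``only if'' argument has a genuine gap at exactly the point you flag as ``the main obstacle'': you never establish continuity of $\mu_\Delta$ on $\sigma(A)$ for $\Delta\in\mathcal{R(S)}$, and with your setup you cannot. Starting from an arbitrary sharp version $A$ supplied by Theorem \ref{Cha}, the map $h:\Lambda_2\to\sigma(A)$ coming from the extension of the Gelfand isomorphism to the von Neumann algebra is only a \emph{Borel} function; even though $h|_M$ is injective, its inverse $h^{-1}:\Gamma\to M$ has no continuity whatsoever, so $\nu_\Delta\circ h^{-1}$ is merely Borel on $\Gamma$ and there is no reason it should extend continuously to $\sigma(A)$ (note the theorem asks for continuity on all of $\sigma(A)$, not just on $\Gamma$). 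Density of $\Gamma$ in $\sigma(A)$ does not help: a Borel function on a dense set need not have a continuous extension. The paper closes this gap not by post-processing $h$ but by \emph{constructing a different generator} $A$: it takes the C$^*$-algebra generated by the spectral projections $E^{(i)}(\Delta_j^{i,k})$ occurring in the spectral resolutions of the $F(\Delta_i)$, observes that its Gelfand spectrum $\Lambda$ embeds in $\prod_{i=1}^\infty\{0,1\}$, and defines $A=\int f\,d\widetilde{E}$ with $f(\lambda)=\sum_i x_i/3^i$ a continuous injection on the compact Hausdorff space $\Lambda$, hence a \emph{homeomorphism} onto $\sigma(A)=f(\Lambda)$. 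Then $\nu_\Delta:=\gamma_\Delta\circ f^{-1}$ is automatically continuous on all of $\sigma(A)$. This choice of $A$ is the essential idea of the proof and is absent from your proposal.

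A secondary gap: you assert that countable additivity of $\mu_{(\cdot)}(\lambda)$ on $\mathcal{R(S)}$ is ``inherited from $\mu^0$'' pointwise on $\Gamma$, but $\mu^0$ is only a weak Markov kernel, so each instance of $\sigma$-additivity holds only $E$-a.e., and there are uncountably many disjoint sequences in $\mathcal{R(S)}$, so you cannot remove a single null set to make them all hold at once. The paper resolves this by invoking corollary 1 of Ref.~\cite{B1} to produce an honest Markov kernel $\omega$ agreeing with $\widetilde{\nu}$ a.e., intersecting over the \emph{countably many} sets $\Delta_i\in\mathcal{R(S)}$ to get a null set $N$, and patching $\omega$ with $\widetilde{\nu}$ outside $\Gamma=\sigma(A)-N$; this step (or an equivalent) is needed and missing. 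Your treatment of the Feller property via the portmanteau theorem does match the paper's.
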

\begin{proof}
Let $\mathcal{A}^W(F)$ be the von Neumann algebra generated by $F$. $\mathcal{A}^W(F)$ coincides with the von Neumann algebra generated by $\{F(\Delta)\}_{\Delta\in\mathcal{R(S)}}$ where, $\mathcal{R(S)}\subset\mathcal{B}(\mathbb{R})$ is the ring generated by the family $\mathcal{S}$ of open intervals with rational end-points (see appendix A for the proof). We recall that both $\mathcal{S}$ and $\mathcal{R(S)}$  are countable (see theorem c, page 24, in Ref. \cite{Halmos}). 

Now, we proceed to the proof of the existence of $A$. Let $\{\Delta_i\}_{i\in\mathbb{N}}$ be an enumeration of the set $\mathcal{R(S)}$ and $O_2:=\{F(\Delta)\}_{\Delta\in\mathcal{R(S)}}$ . Let $E^{(i)}$ denote the spectral measure corresponding to $F(\Delta_i)\in O_2$. We have $F(\Delta_i)=\int x\,d E^{(i)}_x$. Therefore, for each $i,k\in\mathbb{N}$ there exists a division $\{\Delta_j^{(i,k)}\}_{j=1,\dots,m_{i,k}}$ of $[0,1]$ such that 
\begin{equation}\label{D}
\big\|\sum_{j=1}^{m_{i,k}}x^{(i,k)}_j\,E^{(i)}(\Delta_j^{(i,k)})-F(\Delta_i)\big\|\leq \frac{1}{k}.
\end{equation} 

\noindent
By the spectral theorem \cite{Dunford} the von Neumann algebra $\mathcal{A}^W(F)$ contains all the projection operators in the spectral resolution of $F(\Delta)$, $\Delta\in\mathcal{B}(\mathbb{R})$. Therefore, the von Neumann algebra $\mathcal{A}^W(D)$ generated by the set $D:=\{E^{(i)}(\Delta_j^{i,k}),\,j\leq m_{i,k},\,i,k\in\mathbb{N}\}$ is contained in $\mathcal{A}^W(F)$ and then 
\begin{equation}\label{D1}
\mathcal{A}^W(D)\subset\mathcal{A}^W(F)=\mathcal{A}^W(O_2).
\end{equation}
\noindent
 Moreover, the $C^*$-algebra $\mathcal{A}^C(D)$ generated by $D$ contains the $C^*$-algebra $\mathcal{A}^C(O_2)$ generated by $O_2$ (see equation (\ref{D})). Summing up the preceding observations, we have 
\begin{equation*}
\mathcal{A}^C(O_2)\subset\mathcal{A}^C(D)\subset\mathcal{A}^W(F).
\end{equation*}
\noindent
By the double commutant theorem \cite{Kadison}, 
\begin{align*}
\mathcal{A}^W(F)&=[\mathcal{A}^C(O_2)]''\subset[\mathcal{A}^C(D)]''=\mathcal{A}^W(D)
\end{align*} 
\noindent
so that (see equation \ref{D1}),
\begin{equation}\label{double}
\mathcal{A}^W(D)=\mathcal{A}^W(F).
\end{equation} 

\noindent
By theorem 11, page 871 in Ref. \cite{Dunford}, the spectrum $\Lambda$ of $\mathcal{A}^C(D)$ is homeomorphic to a closed subset of $\prod_{i=1}^{\infty}\{0,1\}$. Let $\pi:\Lambda\to\prod_{i=1}^{\infty}\{0,1\}$ denote the homeomorphism between the two spaces.

\noindent
Now, if we identify $\Lambda$ with a closed subset of $\prod_{i=1}^{\infty}\{0,1\}$, we can prove the existence of a continuous function distinguishing the points of $\Lambda$. Indeed, let $\pi(\lambda)=\bar{x}:=(x_1,\dots,x_n,\dots)\in\prod_{i=1}^\infty\{0,1\}$. The function
$$f(\lambda)=\sum_{i=1}^{\infty}\frac{x_i}{3^i}$$
is continuous and injective and then it distinguishes the points of $\Lambda$. Moreover, since $\Lambda$ and $[0,1]$ are Hausdorff,  the map $f:\Lambda\to f(\Lambda)$ is a homeomorphism.

\noindent
By theorem 1, page 895, in Ref. \cite{Dunford}, there exists a spectral measure $\widetilde{E}:\mathcal{B}(\Lambda)\to\mathcal{F(H)}$ such that the map 
\begin{align}\label{E}
T:\mathcal{C}(\Lambda)&\to B(\mathcal{H})\\
g&\mapsto T(g)=\int_{\Lambda}g(\lambda) d\widetilde{E}_{\lambda}\notag
\end{align}
defines an isometric $^*$-isomorphism between $\mathcal{A}^C(D)$ and $\mathcal{C}(\Lambda)$.

\noindent
The fact that $f$ distinguishes the points of $\Lambda$, implies that the self-adjoint operator
\begin{equation*}
A=\int_{\Lambda} f(\lambda)\,d\widetilde{E}_{\lambda}
\end{equation*}
is a generator of the von Neumann algebra $\mathcal{A}^W(D)=\mathcal{A}^W(F)$. 
Indeed, by the Stone-Weierstrass theorem, 
$\mathcal{C}(\Lambda)$ is singly generated, in particular $f$ is a generator. Then, the isomorphism between $\mathcal{A}^C(D)$ and $\mathcal{C}(\Lambda)$ assures that $\mathcal{A}^C(D)$ is singly generated and that $A$ is a generator. Hence,  $\mathcal{A}^W(F)=\mathcal{A}^W(D)=[ \mathcal{A}^C(D)]''$ is singly generated.  In particular, $A$ generates $\mathcal{A}^W(F)$, i.e., $\mathcal{A}^W(F)=\mathcal{A}^W(A)$.

\vskip.2cm
Now, we proceed to the proof of the existence of the weak Markov kernel $\widetilde{\nu}$ such that $(F,A,\widetilde{\nu})$ is a von Neumann triplet.

By (\ref{E}), for each $\Delta\in\mathcal{R(S)}$, there exists a continuous function $\gamma_{\Delta}\in\mathcal{C}(\Lambda)$ such that 
$$F(\Delta)=\int_{\Lambda}\gamma_{\Delta}(\lambda)\, d\widetilde{E}_{\lambda}.$$


\noindent
Now, we show that, for each $\Delta\in\mathcal{R(S)}$, there is a continuous function $\nu_{\Delta}:\sigma(A)\to[0,1]$ from the spectrum of $A$ to the interval $[0,1]$ such that $\nu_{\Delta}(f(\lambda))=\gamma_{\Delta}(\lambda)$, $\lambda\in\Lambda$, and $F(\Delta)=\nu_{\Delta}(A)$.  

\noindent
To prove this, let us consider the function
\begin{equation*}
\nu_{\Delta}(t):=(\gamma_{\Delta}\circ f^{-1})(t),\quad\Delta\in\mathcal{R(S)}.
\end{equation*}
\noindent
It is continuous since it is the composition of continuous functions and, 
$$\nu_{\Delta}(f(\lambda))=\gamma_{\Delta}(f^{-1}(f(\lambda)))=\gamma_{\Delta}(\lambda).$$ 

\noindent
Moreover, 
$$\nu_{\Delta}(A)=F(\Delta),\quad\forall\Delta\in\mathcal{R(S)}.$$
\noindent
Indeed, by the change of measure principle (page 894, ref. \cite{Dunford}), 
\begin{align*}
F(\Delta)&=\int_{\Lambda}\gamma_{\Delta}(\lambda)\,d\widetilde{E}_{\lambda}=\int_{\Lambda}\gamma_{\Delta}(f^{-1}(f(\lambda)))\,d\widetilde{E}_{\lambda}\\
&=\int_{\sigma(A)}\gamma_{\Delta}(f^{-1}(t))\,d E_{t}=\int_{\sigma(A)}\nu_{\Delta}(t)\,d E_{t}=\nu_{\Delta}(A)
\end{align*} 
where $\sigma(A)=f(\Lambda)$ is the spectrum of $A$ and $E$ is the spectral measure corresponding to $A$ defined by the relation $E(\Delta)=\widetilde{E}(f^{-1}(\Delta))$, $\Delta\in\mathcal{B}(\sigma(A))$ (see corollary 10, page 902, in Ref. \cite{Dunford}).  

\noindent
For each $\lambda\in\sigma(A)$, the map $\nu_{(\cdot)}(\lambda):\mathcal{R(S)}\to[0,1]$ defines an additive set function. Indeed, let $\Delta\in\mathcal{R(S)}$ be the disjoint union of the sets $\Delta_1,\Delta_2\in\mathcal{R(S)}$. Then,
\begin{align*}
\int\nu_{(\Delta_1\cup\Delta_2)}(\lambda)\,dE_{\lambda}&=F(\Delta_1\cup\Delta_2)=F(\Delta_1)+F(\Delta_1)\\
&=\int\nu_{\Delta_1}(\lambda)\,dE_{\lambda}+\int\nu_{\Delta_2}(\lambda)\,dE_{\lambda}\\
&=\int\big[\nu_{\Delta_1}(\lambda)+\nu_{\Delta_2}(\lambda)\big]\,dE_{\lambda}
\end{align*}
so that, by the continuity of the functions $\nu_{(\Delta_1)}(\lambda)$ and $\nu_{(\Delta_2)}(\lambda)$, we get (see theorem 1, page 895, in Ref. \cite{Dunford}) 
\begin{equation*}
\nu_{(\Delta_1)}(\lambda)+\nu_{(\Delta_2)}(\lambda)=\nu_{(\Delta_1\cup\Delta_2)}(\lambda),\quad\forall\lambda\in\sigma(A).
\end{equation*}

Now, we extend $\nu$ to all the Borel $\sigma$-algebra of $[0,1]$.

\noindent
Since  $A$ is the generator of $\mathcal{A}^W(F)$, for each $\Delta\in\mathcal{B}([0,1])$, there exists a Borel function $\omega_{\Delta}$ such that.
\begin{equation*}
F(\Delta)=\int_{\sigma(A)}\omega_{\Delta}(t)\,d E_t=\int_{\Lambda} (\omega_{\Delta}\circ f)(\lambda)\,d\widetilde{E}_{\lambda} 
\end{equation*}
\noindent
Then, we can consider the map $\widetilde{\nu}:\sigma(A)\times\mathcal{B}(\mathbb{R})\to[0,1]$ defined as follows
\begin{equation*}
\widetilde{\nu}_{\Delta}(\lambda)=
\begin{cases}
\nu_{\Delta}(\lambda) & if \quad \Delta\in\mathcal{R(S)}\\
\omega_{\Delta}(\lambda) & if \quad \Delta\notin\mathcal{R(S)}.
\end{cases}
\end{equation*}
\noindent
\noindent
Since $\widetilde{\nu}$ coincides with $\nu$ on $\mathcal{R(S)}$ it is additive on $\mathcal{R(S)}$. 

\noindent

\noindent
In order to prove that $\widetilde{\nu}$ is a weak Markov kernel, let us consider a set $\Delta\in\mathcal{B}(\mathbb{R})$ which is the disjoint union of the sets $\{\Delta_i\}_{i\in\mathbb{N}}$, $\Delta_i\in\mathcal{B}(\mathbb{R})$. Then,
\begin{align*}
\int\widetilde{\nu}_{(\cup_{i=1}^{\infty}\Delta_i)}(x)\,dE_{x}&=\int\widetilde{\nu}_{\Delta}(x)d E_x=F(\Delta)=\sum_{i=1}^{\infty}F(\Delta_i)\\
&=\sum_{i=1}^{\infty}\int\widetilde{\nu}_{\Delta_i}(x)\,dE_x=\int\sum_{i=1}^{\infty}\widetilde{\nu}_{\Delta_i}(x)\,dE_x
\end{align*}
so that, by Corollary 9, page 900, in Ref. \cite{Dunford},
\begin{equation*}
\sum_{i=1}^{\infty}\widetilde{\nu}_{\Delta_i}(x)=\widetilde{\nu}_{\Delta}(x), \quad E-a.e,
\end{equation*}
which implies that $\widetilde{\nu}:\sigma(A)\times\mathcal{B}(\mathbb{R})\to[0,1]$ is a weak Markov kernel. In particular $(F,A,\widetilde{\nu})$ is a von Neumann triplet.

Now, we proceed to prove the existence of the Markov kernel $\mu:\Gamma\times\mathcal{B}(\mathbb{R})\to[0,1]$ such that items 1, 2, and 3 of the theorem are satisfied. 

By corollary 1 in Ref. \cite{B1}, starting from $\widetilde{\nu}:\sigma(A)\times\mathcal{R(S)}\to[0,1]$ it is possible to define a Markov kernel $\omega:\sigma(A)\times\mathcal{B}(\mathbb{R})\to[0,1]$ such that $(F,A,\omega)$ is a von Neumann triplet. Since $(F,A,\widetilde{\nu})$ is a von Neumann triplet, for each $\Delta\in\mathcal{B}(\mathbb{R})$,
\begin{equation*}
\int \widetilde{\nu}_{\Delta}(\lambda)\,dE_\lambda=F(\Delta)=\int \omega_{\Delta}(\lambda)\,dE_\lambda
\end{equation*}
\noindent
hence,
\begin{equation}\label{ae}
\omega_{\Delta}(\lambda)=\widetilde{\nu}_{\Delta}(\lambda),\quad E-a.e.
\end{equation}
\noindent
Now, let $\{\Delta_i\}_{i\in\mathbb{N}}$ be an enumeration of $\mathcal{R(S)}$. By equation (\ref{ae}), for each $i\in\mathbb{N}$, there is a set $N_i\subset\sigma(A)$, $E(N_i)=\mathbf{0}$, such that 
\begin{equation}
\omega_{\Delta_i}(\lambda)=\widetilde{\nu}_{\Delta_i}(\lambda),\quad \lambda\in \sigma(A)-N_i.
\end{equation}
\noindent
Then, for each $i\in\mathbb{N}$,
\begin{equation}
\omega_{\Delta_i}(\lambda)=\widetilde{\nu}_{\Delta_i}(\lambda),\quad\lambda\in \sigma(A)-N
\end{equation}
\noindent
where, 
\begin{equation*}
N:=\cup_{i=1}^{\infty}N_i,\quad E(N)=\mathbf{0}.
\end{equation*}
\noindent
Therefore, for almost all $\lambda\in\sigma(A)$, $\widetilde{\nu}_{(\cdot)}(\lambda)$ is $\sigma$-additive on $\mathcal{R(S)}$.

\noindent
Now, we can define the map
\begin{equation*}
\mu_{(\cdot)}(\lambda)=
\begin{cases}
\widetilde{\nu}_{(\cdot)}(\lambda) & \lambda\in N\\
\omega_{(\cdot)}(\lambda) & \lambda\in \sigma(A)-N
\end{cases}
\end{equation*}
\noindent
If we put $\Gamma=\sigma(A)-N$, we have that $\mu_{(\cdot)}(\cdot):\Gamma\times\mathcal{B}(\mathbb{R})\to[0,1]$ is a Markov kernel. Therefore, $\mu_{(\cdot)}(\cdot):\sigma(A)\times\mathcal{B}(\mathbb{R})\to[0,1]$ is a strong Markov kernel.

\noindent
Notice that, for each $\Delta\in\mathcal{R(S)}$ and $\lambda\in\sigma(A)$,
$$\mu_{\Delta}(\lambda)=\widetilde{\nu}_{\Delta}(\lambda)$$
\noindent
so that, $\mu_{\Delta}$ is continuous for each $\Delta\in\mathcal{R(S)}$ and additive on $\mathcal{R(S)}$. We also have,
$$\mu_{\Delta}(A)=\omega_{\Delta}(A)=F(\Delta), \quad\Delta\in\mathcal{R(S)}.$$
We have proved items 1, 2, and 3. Item 4 comes from theorem \ref{separate}. 

It remains to prove that $\mu$ is a Feller Markov kernel. By item 1, $\mu_{\Delta}$ is continuous for each $\Delta\in \mathcal{R(S)}$. Notice that for each open set $O\in\mathcal{B}(\mathbb{R})$, there is a countable family of sets $\Delta_i\in\mathcal{R(S)}$ such that $O=\cup_{i=1}^{\infty}\Delta_i$. Therefore, by theorem 2.2 in Ref. \cite{Billingsley}, $\mu_{(\cdot)}(\lambda_n)$ converges weakly to $\mu_{(\cdot)}(\lambda)$, i.e.,
$$\lim_{n\to\infty}\int f(t)\,\mu_t(\lambda_n)=\int f(t)\,\mu_t(\lambda),\quad f\in\mathcal{C}_b(\mathbb{R})$$
\noindent
whenever $\lim_{n\to\infty}\lambda_n=\lambda$ and $\mathcal{C}_b(\mathbb{R})$ is the space of bounded, continuous functions.

Since $F(\Delta)=\mu_{\Delta}(A)$ implies the commutativity of $F$, the theorem is proved.
\end{proof}

\vskip1cm

\section{Characterization of Semi-spectral Measures which admit strong Feller Markov Kernels}
\noindent
In the last section we proved that each commutative semispectral measure admits a strong Markov kernel $\mu$ such that $\mu_{\Delta}$ is a continuous function for each $\Delta\in\mathcal{R(S)}$ where, $\mathcal{R(S)}$ is a ring which generates the Borel $\sigma$-algebra $\mathcal{B}(\mathbb{R})$.

\noindent
In the present section we characterize the commutative semispectral measures for which the Markov kernel $\mu$, whose existence was proved in theorem \ref{Cha}, is such that $\mu_{\Delta}$ is continuous for each $\Delta\in\mathcal{B}(\mathbb{R})$.
Whenever such a Markov kernel exists, we say that the semispectral measure admits a strong Feller Markov kernel. In particular, we prove that a commutative semispectral measure $F$ admits a strong Feller Markov kernel  if and only if $F$ is uniformly continuous. 

\begin{definition}
Let $F:\mathcal{B}(\mathbb{R})\to\mathcal{F(H)}$.  Let $\Delta=\cup_{i=1}^{\infty}\Delta_i$,  $\Delta_i\cap\Delta_j=\emptyset$. If 
$$\lim_{n\to\infty}\sum_{i=1}^{n}F(\Delta_i)=F(\Delta)$$
 in the uniform operator topology then we say that $F$ is uniformly continuous.
\end{definition}
\noindent
Notice that the term uniformly continuous derives from the fact that the $\sigma$-additivity of $F$  in the uniform operator topology is equivalent to the continuity in the uniform operator topology. Analogously, the $\sigma$-additivity of $F$ in the weak operator topology is equivalent to the continuity of $F$ in the weak operator topology \cite{Berberian}. 
\begin{definition}
A Markov kernel $\mu_{(\cdot)}(\cdot):[0,1]\times\mathcal{B}(\mathbb{R})\to[0,1]$ is said to be strong Feller if  $\mu_{\Delta}$ is a continuous function for each $\Delta\in\mathcal{B}(\mathbb{R})$.
\end{definition}
\begin{definition}
We say that a commutative POVM admits a strong Feller Markov kernel if there exists a strong Feller Markov kernel $\mu$ such that $F(\Delta)=\int \mu_{\Delta}(\lambda)\,dE_{\lambda}$, where $E$ is the sharp reconstruction of $F$.
\end{definition}

\noindent
In order to prove the main theorem of the section we need the following lemma.
\begin{lemma}\label{lem4}
Let $F$ be uniformly continuous. Let $\mu$ be a weak Markov kernel and $(F,A,\mu)$ a von Neumann triplet. Suppose that $\mu_{\Delta}$ is continuous for each $\Delta\in\mathcal{R(S)}$. Then, for each $\lambda\in\sigma(A)$, $\mu_{(\cdot)}(\lambda)$ is $\sigma$-additive on $\mathcal{R(S)}$.
\end{lemma}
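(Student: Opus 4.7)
The plan is to show that the $\sigma$-additivity of the family $\{\mu_{\Delta}\}_{\Delta\in\mathcal{R(S)}}$ at a fixed point $\lambda\in\sigma(A)$ is forced by the uniform continuity of $F$, essentially by translating norm convergence of operators to uniform convergence of continuous functions on $\sigma(A)$ via the functional calculus.

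First I would upgrade the $E$-a.e.\ additivity of the weak Markov kernel to pointwise additivity on $\mathcal{R(S)}$. For $\Delta_1,\Delta_2\in\mathcal{R(S)}$ disjoint we have $\Delta_1\cup\Delta_2\in\mathcal{R(S)}$, so the functions $\mu_{\Delta_1\cup\Delta_2}$ and $\mu_{\Delta_1}+\mu_{\Delta_2}$ are both continuous on $\sigma(A)$. The weak Markov kernel axiom gives $\mu_{\Delta_1\cup\Delta_2}(\lambda)=\mu_{\Delta_1}(\lambda)+\mu_{\Delta_2}(\lambda)$ off an $E$-null Borel set $N$. By the definition of $\sigma(A)$, every non-empty relatively open subset of $\sigma(A)$ has strictly positive $E$-measure, so $\sigma(A)\setminus N$ is dense in $\sigma(A)$. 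Two continuous functions on $\sigma(A)$ that coincide on a dense set coincide everywhere. Since $\mathcal{R(S)}$ is countable, a single null set works simultaneously for all finite disjoint unions, giving finite additivity of $\mu_{(\cdot)}(\lambda)$ on $\mathcal{R(S)}$ for every $\lambda\in\sigma(A)$.

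Next, fix a disjoint sequence $\{\Delta_i\}\subset\mathcal{R(S)}$ with $\Delta:=\cup_{i=1}^{\infty}\Delta_i\in\mathcal{R(S)}$. Since $\mathcal{R(S)}$ is a ring, $\Delta_n^{\ast}:=\Delta\setminus\cup_{i=1}^{n}\Delta_i$ lies in $\mathcal{R(S)}$, so $\mu_{\Delta_n^{\ast}}$ is continuous and, by the finite additivity just established,
\begin{equation*}
\mu_{\Delta}(\lambda)-\sum_{i=1}^{n}\mu_{\Delta_i}(\lambda)=\mu_{\Delta_n^{\ast}}(\lambda),\qquad\lambda\in\sigma(A).
\end{equation*}
Applying the von Neumann triplet relation gives $F(\Delta)-\sum_{i=1}^{n}F(\Delta_i)=F(\Delta_n^{\ast})=\int_{\sigma(A)}\mu_{\Delta_n^{\ast}}(\lambda)\,dE_\lambda$. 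For a bounded continuous function on $\sigma(A)$, the operator norm of its spectral integral equals its supremum on $\sigma(A)$ (since $E$ is fully supported on $\sigma(A)$), so
\begin{equation*}
\Bigl\|F(\Delta)-\sum_{i=1}^{n}F(\Delta_i)\Bigr\|=\sup_{\lambda\in\sigma(A)}\bigl|\mu_{\Delta_n^{\ast}}(\lambda)\bigr|.
\end{equation*}

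Finally, the uniform continuity hypothesis on $F$ says that the left-hand side tends to $0$ as $n\to\infty$. Hence $\mu_{\Delta_n^{\ast}}\to 0$ uniformly on $\sigma(A)$, and in particular pointwise, giving $\sum_{i=1}^{n}\mu_{\Delta_i}(\lambda)\to\mu_{\Delta}(\lambda)$ for every $\lambda\in\sigma(A)$. This is exactly $\sigma$-additivity of $\mu_{(\cdot)}(\lambda)$ on $\mathcal{R(S)}$. The main subtle point is the passage from the $E$-almost-everywhere additivity built into the definition of a weak Markov kernel to pointwise additivity on all of $\sigma(A)$; the continuity of $\mu_{\Delta}$ on $\mathcal{R(S)}$ combined with the density of $E$-conull sets in $\sigma(A)$ is exactly what removes the null-set obstruction.
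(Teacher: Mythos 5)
Your proof is correct and follows essentially the same route as the paper: both arguments translate the uniform-operator-norm convergence $\|F(\Delta)-F(\cup_{i=1}^{n}\Delta_i)\|\to 0$ into uniform convergence of the continuous functions $\mu_{\Delta}-\sum_{i=1}^{n}\mu_{\Delta_i}$ on $\sigma(A)$ via the identity between the operator norm of a spectral integral of a continuous function and its supremum on the support of $E$. Your preliminary step upgrading the $E$-a.e.\ additivity to pointwise additivity (using density of conull sets in $\sigma(A)$) is a careful refinement that the paper leaves implicit, but it does not change the substance of the argument.
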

\begin{proof}
Let $\Delta,\Delta_i\in\mathcal{R(S)}$, $\Delta_i\cap\Delta_j=\emptyset$, $\cup_{i=1}^{\infty}\Delta_i=\Delta$. Then, 
\begin{equation*}
\mathbf{0}=u-\lim_{n\to\infty}\big(F(\Delta)-F(\cup_{i=1}^n\Delta_i)\big)=u-\lim_{n\to\infty}\int\big(\mu_{\Delta}(\lambda)-\sum_{i=1}^n\mu_{\Delta_i}(\lambda)\big)\,dE_{\lambda}.
\end{equation*}
By the uniform continuity  of $F$ and theorem 1, page 895, in Ref. \cite{Dunford}, it follows that, $\forall\epsilon>0$, there exists a number $\bar{n}\in\mathbb{N}$, such that $n>\bar{n}$ implies,
\begin{align}\label{conti}
\|\mu_{\Delta}(\lambda)-\sum_{i=1}^n\mu_{\Delta_i}(\lambda)\|_{\infty}=\|\int\big(\mu_{\Delta}(\lambda)-\sum_{i=1}^n\mu_{\Delta_i}(\lambda)\big)\,dE_{\lambda}&\|\\
=\|F(\Delta)-F(\cup_{i=1}^n\Delta_i)&\|\leq\epsilon.\notag
\end{align}
By equation (\ref{conti}),
\begin{equation*}
|\mu_{\Delta}(\lambda)-\sum_{i=1}^n\mu_{\Delta_i}(\lambda)|\leq\epsilon,\,\,\,\,\forall\lambda\in\sigma(A).
\end{equation*}
\end{proof}
\vskip.3cm
\noindent

\noindent

\begin{theorem}\label{uni}
A commutative POVM $F:\mathcal{B}(\mathbb{R})\to\mathcal{F(H)}$ admits a strong Feller Markov kernel if and only if it is uniformly continuous.
\end{theorem}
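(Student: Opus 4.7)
The plan is to prove both implications by converting between operator-norm convergence and sup-norm convergence via Dunford's isometry $\|g(A)\| = \|g\|_\infty$ for continuous $g$ on $\sigma(A)$ (theorem 1, page 895, in Ref.~\cite{Dunford}), exploiting that $\sigma(A)\subseteq[0,1]$ is compact (see theorem \ref{weak}).

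For the direction ``strong Feller $\Rightarrow$ uniformly continuous'', fix a disjoint Borel decomposition $\Delta=\cup_i\Delta_i$. The partial sums $S_n(\lambda):=\sum_{i=1}^n\mu_{\Delta_i}(\lambda)$ are continuous and increase pointwise to $\mu_{\Delta}(\lambda)$, by $\sigma$-additivity of the probability measure $\mu_{(\cdot)}(\lambda)$; the limit $\mu_{\Delta}$ is continuous by the strong Feller hypothesis. Dini's theorem on the compact set $\sigma(A)$ promotes pointwise convergence to uniform convergence, whence
$$\Big\|F(\Delta)-\sum_{i=1}^n F(\Delta_i)\Big\|=\|\mu_{\Delta}-S_n\|_{\infty}\to 0,$$
giving uniform continuity of $F$.

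For the converse, I would start from the strong Markov kernel $\mu$ supplied by theorem \ref{weak}, which is continuous on $\mathcal{R(S)}$, and apply a Dynkin class argument to
$$\mathcal{M}:=\{\Delta\in\mathcal{B}(\mathbb{R}):\mu_{\Delta}\text{ admits a continuous representative on }\sigma(A)\}.$$
One checks that $\mathbb{R}\in\mathcal{M}$, that $\mathcal{M}$ is closed under complements via $\mu_{\Delta^c}=1-\mu_{\Delta}$, and the heart of the argument, closure under disjoint countable unions: given disjoint $\{\Delta_i\}\subset\mathcal{M}$, the continuous partial sums $S_n=\sum_{i=1}^n\mu_{\Delta_i}$ are Cauchy in sup-norm because
$$\|S_n-S_m\|_{\infty}=\Big\|F\Big(\cup_{i=m+1}^n\Delta_i\Big)\Big\|\xrightarrow[n,m\to\infty]{}0,$$
by Dunford's isometry combined with the uniform continuity of $F$. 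The uniform limit $S$ is continuous and coincides $E$-a.e.\ with $\mu_{\cup\Delta_i}$, so $\cup\Delta_i\in\mathcal{M}$. Since $\mathcal{R(S)}\subset\mathcal{M}$ is a $\pi$-system generating $\mathcal{B}(\mathbb{R})$, the Dynkin $\pi$-$\lambda$ theorem gives $\mathcal{M}=\mathcal{B}(\mathbb{R})$. Redefining $\bar{\mu}_{\Delta}$ to be the continuous representative for every $\Delta\in\mathcal{B}(\mathbb{R})$ yields the sought strong Feller kernel: on $\Gamma$ it coincides with the original $\mu_{(\cdot)}(\lambda)$, hence is a probability measure, and for $\lambda\in\sigma(A)\setminus\Gamma$ one passes to the limit along $\lambda_n\in\Gamma$ with $\lambda_n\to\lambda$ using continuity of each $\bar{\mu}_{\Delta}$ and the uniformity of the $\sigma$-additivity established above.

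The main obstacle is precisely the last step: after replacing $\mu_{\Delta}$ by its continuous representative, one must ensure that $\bar{\mu}_{(\cdot)}(\lambda)$ remains a genuine probability measure, not merely a finitely additive set function, for every $\lambda\in\sigma(A)\setminus\Gamma$. This is exactly where the uniformity (in $\lambda$) of the convergence $S_n\to\mu_{\Delta}$ extracted from Dunford's isometry and the uniform continuity of $F$ is indispensable: it allows $\sigma$-additivity to be transferred to the ``new'' points by uniform passage to the limit, closing the argument.
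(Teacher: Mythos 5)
Your proof of the implication ``strong Feller $\Rightarrow$ uniformly continuous'' is essentially the paper's: Dini's theorem on the compact spectrum combined with the isometry $\|\int g\,dE\|=\|g\|_{\infty}$ for continuous $g$ (theorem 1, page 895, in Ref.~\cite{Dunford}). For the converse, however, your route is genuinely different. The paper first upgrades the kernel of theorem \ref{weak} to a Markov kernel $\widetilde{\mu}$ (via lemma \ref{lem4} and an extension result from Ref.~\cite{B1}) and then proves continuity of $\widetilde{\mu}_{\Delta}$ in three topological stages: open intervals as increasing unions of rational intervals, open sets as countable disjoint unions of intervals, and general Borel sets by outer regularity of $F$ through a decreasing sequence of open sets. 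You instead run a Dynkin $\pi$--$\lambda$ argument on the class $\mathcal{M}$ of sets whose $\mu_{\Delta}$ admits a continuous representative; the analytic engine --- uniform Cauchyness of the continuous partial sums, extracted from the uniform continuity of $F$ through the same isometry --- is identical, but your bookkeeping uses only the $\sigma$-algebra structure and dispenses with the appeal to regularity that the paper makes, without comment, in its third step; this is a tidier and slightly more general organization. The price is paid at the end, and there your sketch contains one imprecision: the null set on which $\bar{\mu}_{\Delta}$ differs from $\mu_{\Delta}$ depends on $\Delta$, and since there are uncountably many Borel sets there is no single full-measure $\Gamma$ on which the two kernels coincide for every $\Delta$, so the clause ``on $\Gamma$ it coincides with the original $\mu_{(\cdot)}(\lambda)$, hence is a probability measure'' does not stand as written. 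The correct argument is nevertheless already contained in your $\lambda$-system step: $\sum_{i=1}^{n}\bar{\mu}_{\Delta_i}$ is the continuous representative of $\mu_{\cup_{i=1}^{n}\Delta_i}$ and converges uniformly, hence pointwise at \emph{every} $\lambda\in\sigma(A)$, to $\bar{\mu}_{\cup_i\Delta_i}$; moreover a continuous function that is nonnegative (respectively equal to $1$) $E$-a.e.\ is so everywhere on $\sigma(A)=\mathrm{supp}\,E$. This gives positivity, normalization and countable additivity of $\bar{\mu}_{(\cdot)}(\lambda)$ at every point, with no need to approximate $\lambda$ from $\Gamma$. With that repair your proof is complete and correct.
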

\begin{proof}
\noindent
Suppose $F$ uniformly continuous. By theorem \ref{weak}, there is a weak Markov kernel $\mu:\sigma(A)\times\mathcal{B}(\mathbb{R})\to[0,1]$ such that $\mu_{\Delta}(\cdot)$ is continuous for every $\Delta\in\mathcal{R(S)}$ and a self-adjoint operator $A$ such that $(F,A,\mu)$ is a von Neumann triplet.  By lemma \ref{lem4}, $\mu$ is $\sigma$-additive on $\mathcal{R(S)}$. Therefore (see proposition 2 in Ref. \cite{B1}), the map $\mu:\sigma(A)\times\mathcal{R(S)}\to[0,1]$ can be extended  to a Markov kernel $\widetilde{\mu}:\sigma(A)\times\mathcal{B}(\mathbb{R})\to[0,1]$ whose restriction to $\mathcal{R(S)}$ coincides with $\mu$ and such that $F(\Delta)=\widetilde{\mu}_{\Delta}(A)$.

Now we prove that $\widetilde{\mu}_{\Delta}$ is continuous for each $\Delta\in\mathcal{B}(\mathbb{R})$. We proceed by steps.

\textbf{1)} $\widetilde{\mu}$ is continuous for each open interval. For each open interval, there exists an increasing family of sets $\Delta_i\in\mathcal{S}$ such that $\Delta_i\uparrow\Delta$. Indeed, if $\Delta=(a,b)$, $a,b\in\overline{R}$, the family of sets $\{(a_i,b_i)\in\mathcal{S}\}_{i\in\mathbb{N}}$ such that $a_i>a_{i+1}> a$, $\lim_{i\to\infty}a_i=a$, $b_i<b_{i+1}$, $\lim_{i\to\infty}b_i=b$, is increasing and   $\cup_{i=1}^{\infty}\Delta_i=\Delta$. Then, 
\begin{equation*}
\int\widetilde{\mu}_{\Delta}(\lambda)\,dE_{\lambda}=F(\Delta)=u-\lim_{i\to\infty}F(\Delta_i)=u-\lim_{i\to\infty}\int\widetilde{\mu}_{\Delta_i}(\lambda)\,dE_{\lambda}.
\end{equation*}
By the uniform continuity  of $F$, it follows that, $\forall\epsilon>0$, there exists a number $\bar{n}\in\mathbb{N}$, such that $n,m>\bar{n}$ implies,
\begin{align}\label{conti1}
\|\widetilde{\mu}_{\Delta_n}(\lambda)-\widetilde{\mu}_{\Delta_m}(\lambda)\|_{\infty}=\|\int[\widetilde{\mu}_{\Delta_n}(\lambda)-\widetilde{\mu}_{\Delta_m}(\lambda)]\,dE_{\lambda}&\|\\
=\|F(\Delta_n)-F(\Delta_m)&\|\leq\epsilon.\notag
\end{align}
By equation (\ref{conti1}),
\begin{equation}\label{Cauchy}
|\widetilde{\mu}_{\Delta_n}(\lambda)-\widetilde{\mu}_{\Delta_m}(\lambda)|\leq\epsilon,\,\,\,\,\forall\lambda\in\sigma(A).
\end{equation}
Since $\widetilde{\mu}$ is a Markov kernel, 
$$\lim_{i\to\infty}\widetilde{\mu}_{\Delta_i}(\lambda)=\widetilde{\mu}_{\Delta}(\lambda),\quad\forall\lambda\in\sigma(A).$$
\noindent
Moreover, by equation (\ref{Cauchy}), the convergence is uniform
and this proves the continuity of $\widetilde{\mu}_{\Delta}$.

\textbf{2)} $\widetilde{\mu}_{\Delta}$ is continuous for each open set. Each open set $\Delta$  is the disjoint union of a countable family of open intervals, i.e., $\Delta=\cup_{i=1}^{\infty}\Delta_i$, $\Delta_i=(a_i,b_i)$. Let us define the set $\widetilde{\Delta}_n:=\cup_{i=1}^{n}\Delta_i$. Therefore, $\widetilde{\Delta}_n\uparrow\Delta$. Moreover, $\mu_{\widetilde{\Delta}_n}$ is continuous for each $n\in\mathbb{N}$, and
\begin{equation*}
u-\lim_{i\to\infty}F(\widetilde{\Delta}_n)=F(\Delta).
\end{equation*}
Then, the same reasoning we used above allows us to conclude that the family of continuous functions $\mu_{\widetilde{\Delta}_n}$ converges uniformly to $\mu_{\Delta}$.

\textbf{3)} $\widetilde{\mu}_{\Delta}$ is continuous for each Borel set.
Let $\Delta\in\mathcal{B}(\mathbb{R})$. Since $F$ is regular, there is a decreasing sequence of open sets $G_i$, $\Delta\subset G_i$, such that 
\begin{equation*}\label{s}
s-\lim_{n\to\infty}F(\cap_{i=1}^n G_i)=s-\lim_{n\to\infty}F(G_i)=F(\Delta).
\end{equation*} 
\noindent
Moreover, by the uniform continuity of $F$,
$$u-\lim_{n\to\infty}F(\cap_{i=1}^n G_i))=F(\cap_{i=1}^\infty G_i).$$
\noindent
Therefore, $F(\Delta)=F(\cap_{i=1}^\infty G_i)$ and then, 
$$u-\lim_{n\to\infty}F(\cap_{i=1}^n G_i)=u-\lim_{n\to\infty}F(G_i)=F(\Delta).$$ 
\noindent
Then, the same reasoning we used in steps 1 and 2 allows us to conclude that the family of continuous functions $\widetilde{\mu}_{(\cap_{i=1}^n\Delta_i)}$ converges uniformly to $\widetilde{\mu}_{\Delta}$ and then the continuity of $\widetilde{\mu}_{\Delta}$.

In order to prove the second part of the theorem we show that the existence of a strong Feller Markov kernel implies the uniform continuity of $F$. Suppose that there exists a strong Feller Markov kernel $\mu$ such that $F(\Delta)=\mu_{\Delta}(\lambda)$. Since $\mu$ is a Markov kernel it is $\sigma$-additive. Then,
\begin{equation*}
\lim_{n\to\infty}\big(\mu_{\Delta}(\lambda)-\sum_{i=1}^{n}\mu_{\Delta_i}(\lambda)\big)=0, \quad\lambda\in\sigma(A).
\end{equation*}
\noindent
where, $\Delta,\Delta_i\in\mathcal{B}([0,1])$, $\cup_{i=1}^{\infty}\Delta_i=\Delta$.

\noindent
By hypothesis,
$$\mu_{\Delta}(\lambda)-\sum_{i=1}^{n}\mu_{\Delta_i}(\lambda)\in\mathcal{C}(\sigma(A)),\quad\forall n\in\mathbb{N}.$$
\noindent
Then, by theorem \ref{lem7} in appendix B,
$$u-\lim_{n\to\infty}\big(\mu_{\Delta}(\lambda)-\sum_{i=1}^{n}\mu_{\Delta_i}(\lambda)\big)=0.$$
\noindent
 By theorem 1, page 895, in Ref. \cite{Dunford}, $\|F(\Delta)\|=\|\mu_{\Delta}\|_{\infty}$, hence
\begin{align*}
\lim_{n\to\infty}\| F(\Delta)-F(\cup_{i=1}^n\Delta_i)\|=\lim_{n\to\infty}\|\mu_{\Delta}-\sum_{i=1}^n\mu_{\Delta_i}\|_{\infty}=0.
\end{align*} 
\noindent
which proves that $F$ is uniformly continuous.
\end{proof}

\begin{example}\label{example2}
Let us consider the following unsharp position observable 
\begin{align}\label{PositionExample2}
Q^f(\Delta)&:=\int_{[0,1]}\mu_{\Delta}(x)\,dQ_x,\quad\Delta\in\mathcal{B}(\mathbb{R}),\\
\mu_{\Delta}(x)&:=\int_{\mathbb{R}}\chi_{\Delta}(x-y)\, f(y)\,dy,\quad x\in[0,1]\notag
\end{align}
\noindent
where, $f$ is a bounded, continuous function such that $f(y)=0$, $y\notin [0,1]$ and  

$$\int_{[0,1]} f(y)\,dy=1,$$
\noindent
 and $Q_x$ is the spectral measure corresponding to the position operator
\begin{align*}
Q:L^2([0,1])&\to L^2([0,1])\\
\psi(x)&\mapsto (Q\psi)(x):=x\psi(x)
\end{align*}
\noindent
Notice that, for each $\Delta\in\mathcal{B}({\mathbb{R}})$, $\mu_{\Delta}:[0,1]\to[0,1]$ is continuous. Indeed, by the uniform continuity of $f$, for each $\epsilon>0$, there is a $\delta>0$ such that $\vert x-x'\vert\leq\delta$ implies $\vert f(x-y)-f(x'-y)\vert\leq\epsilon$, for each $y$. Therefore,
\begin{align*}
\vert\mu_{\Delta}(x)-\mu_{\Delta}(x')\vert&=\Big\vert\int_{\mathbb{R}}\chi_{\Delta}(x-y)\, f(y)\,dy-\int_{\mathbb{R}}\chi_{\Delta}(x'-y)\, f(y)\,dy\Big\vert\\
&=\Big\vert\int_{\Delta} [f(x-y)-f(x'-y)]\,dy\Big\vert\leq\epsilon\int_{\Delta\cap[-1,1]}\,dy\leq2\epsilon
\end{align*}
\noindent
By theorem \ref{uni} and the continuity of $\mu_{\Delta}$, $\Delta\in\mathcal{B}(\mathbb{R})$, $Q^f$ is uniformly continuous. That can be proved as follows. Suppose $\Delta_i\downarrow\Delta$ and $f(y)\leq M$, $y\in\mathbb{R}$. Since, for each $x\in[0,1]$, 
$$\mu_{\Delta_i-\Delta}(x)=\int_{\Delta_i-\Delta} f(x-y)\,dy\leq M\int_{(\Delta_i-\Delta)\cap[-1,1]}dx$$
\noindent
we have that, for each $\psi\in\mathcal{H}$, $|\psi|^2=1$,
\begin{align*}
\langle\psi,Q^f(\Delta_i-\Delta)\psi\rangle=\int_{[0,1]}\mu_{\Delta_i-\Delta}(x)\,|\psi|^2(x)\,dx\leq M\int_{(\Delta_i-\Delta)\cap[-1,1]}dx
\end{align*}
\noindent
which proves the uniform continuity of $Q^f$.
\end{example}

\noindent
In the case of  uniformly continuous POV measures, we can prove a necessary condition for the norm-1-property which has been recently used in Ref. \cite{B11} in order to study the localization in phase space of massless relativistic particles.
\begin{definition}[\cite{Heinonen}]
A semispectral measure $F$ has the norm-1-property if $\|F(\Delta)\|=1$, for each $\Delta\in\mathcal{B}(\mathbb{R})$ such that $F(\Delta)\neq\mathbf{0}$. 
\end{definition}
\begin{theorem}\label{norm1}
Let $F$ be uniformly continuous. Then, $F$ has the norm-1-property only if $\|F(\{\lambda\})\|\neq 0$ for each $\lambda\in\sigma(F)$.
\end{theorem}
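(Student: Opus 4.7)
I would prove the theorem by contrapositive: assume $\lambda_0\in\sigma(F)$ satisfies $\|F(\{\lambda_0\})\|=0$, i.e.\ $F(\{\lambda_0\})=\mathbf{0}$, and exhibit Borel sets $\Delta$ with $F(\Delta)\neq\mathbf{0}$ but $\|F(\Delta)\|<1$, contradicting the norm-$1$-property.

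The natural candidates are the shrinking open intervals $\Delta_n:=(\lambda_0-1/n,\lambda_0+1/n)$. By the very definition of the spectrum $\sigma(F)$, we have $F(\Delta_n)\neq\mathbf{0}$ for every $n\in\mathbb{N}$, so if $F$ had the norm-$1$-property then $\|F(\Delta_n)\|=1$ for every $n$. The plan is to derive a contradiction with this by showing that, under uniform continuity, $\|F(\Delta_n)\|\to\|F(\{\lambda_0\})\|=0$.

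To obtain this norm-convergence I would first argue that uniform continuity is equivalent to continuity from above for decreasing sequences of Borel sets in the uniform operator topology. Given $\Delta_n\downarrow\Delta$, write $\Delta_1$ as the countable disjoint union $\Delta\,\cup\,\bigcup_{k=1}^{\infty}(\Delta_k\setminus\Delta_{k+1})$; uniform continuity (i.e.\ $\sigma$-additivity in norm) then gives
\begin{equation*}
F(\Delta_n)=F(\Delta)+\sum_{k=n}^{\infty}F(\Delta_k\setminus\Delta_{k+1}),
\end{equation*}
where the tail is the norm-limit of the remainder of a convergent series, hence $\|F(\Delta_n)-F(\Delta)\|\to 0$. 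Applying this with $\Delta_n=(\lambda_0-1/n,\lambda_0+1/n)$ and $\Delta=\{\lambda_0\}$ yields $\|F(\Delta_n)\|\to\|F(\{\lambda_0\})\|=0$, which is incompatible with $\|F(\Delta_n)\|=1$ for all $n$. This contradicts the assumed norm-$1$-property and completes the proof.

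The argument is essentially straightforward once the right continuity-from-above statement is in place; the only mildly technical step is the norm-convergence $F(\Delta_n)\to F(\{\lambda_0\})$, which is why uniform (rather than merely weak or strong) continuity of $F$ is essential: under weak $\sigma$-additivity the analogous convergence would only hold in the weak operator topology and the norm estimate $\|F(\Delta_n)\|=1$ could not be transferred in the limit.
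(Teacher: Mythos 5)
Your proof is correct and follows essentially the same route as the paper: both take a nested sequence of open intervals shrinking to $\{\lambda_0\}$, use the definition of $\sigma(F)$ to force $\|F(\Delta_n)\|=1$ under the norm-$1$-property, and contradict this via norm-convergence $F(\Delta_n)\to F(\{\lambda_0\})=\mathbf{0}$. The only difference is that you explicitly derive continuity from above in the uniform topology from the norm $\sigma$-additivity via the disjoint decomposition $\Delta_n=\{\lambda_0\}\cup\bigcup_{k\geq n}(\Delta_k\setminus\Delta_{k+1})$, a step the paper leaves implicit.
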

\begin{proof}
We proceed by contradiction. Suppose that $F$ has the norm-1 property and that there exists $\lambda\in\sigma(F)$, such that $\|F(\{\lambda\})\|=0$. Let $(a_i,b_i)\subset \mathcal{B}([0,1])$ be a sequence of open intervals such that, $a_i<\lambda<b_i$, $(a_{i+1},b_{i+1})\subset(a_i,b_i)$, $\lim_{i\to\infty}a_i=\lambda$, $\lim_{i\to\infty}b_i=\lambda$. Then, $(a_i,b_i)\downarrow\{\lambda\}$. Moreover, by the uniform continuity of $F$ and the norm-1 property, 
$$1=\lim_{i\to\infty}\|F((a_i,b_i))\|=\lim_{i\to\infty}\|F((a_i,b_i))-F(\{\lambda\})\|=0.$$

\end{proof}

\begin{example}
Let $Q^f$ be as in example \ref{example2}. Theorem \ref{norm1} implies that $Q^f$ cannot have the norm-1 property. Indeed, for each $\lambda\in\mathbb{R}$, 
$$Q^f(\{\lambda\})\psi=\lim_{i\to\infty}Q^f([\lambda,\lambda_i))\psi=\lim_{i\to\infty}\mu_{[\lambda,\lambda_i)}(x)\psi(x)=0,\quad\forall\psi\in\mathcal{H}$$
\noindent
where, $\lambda,\lambda_i\in\mathbb{R}$, $\lambda_i\to\lambda$.
\end{example}
\vskip1cm

\section{Absolutely continuous semispectral measures}

\noindent
In the present section, we prove that  absolutely continuous commutative POV measures admit a strong Feller Markov kernel. Then, we apply the result to the case of the unsharp position observable. 
\begin{definition}\cite{Schroeck,Schroeck1}
A POV measure $F:\mathcal{B}(\mathbb{R})\to\mathcal{F(H)}$ is absolutely continuous with respect to a measure $\nu:\mathcal{B}(\mathbb{R})\to[0,1]$ if there exists a positive number $c$ such that $\| F(\Delta)\|\leq c\,\nu(\Delta)$, for each $\Delta\in\mathcal{B}(\mathbb{R})$.
\end{definition}
\begin{theorem}\label{abs}
Let $F$ be absolutely continuous with respect to a finite measure $\nu$. Then, $F$ is uniformly continuous.
\end{theorem}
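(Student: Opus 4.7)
The plan is to derive the uniform $\sigma$-additivity of $F$ directly from the norm bound $\|F(\Delta)\|\leq c\,\nu(\Delta)$, exploiting the fact that a POVM is automatically finitely additive (being weakly $\sigma$-additive) and that $\nu$, being a finite measure, is countably additive with a vanishing tail.

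First I would fix a disjoint decomposition $\Delta=\bigcup_{i=1}^{\infty}\Delta_i$ with $\Delta_i\in\mathcal{B}(\mathbb{R})$. By the finite additivity of $F$, which follows from its weak $\sigma$-additivity,
\[
F(\Delta)-\sum_{i=1}^{n}F(\Delta_i)=F\Bigl(\bigcup_{i=n+1}^{\infty}\Delta_i\Bigr)
\]
for every $n\in\mathbb{N}$. This reduces the problem of controlling a tail of partial sums, in the uniform operator topology, to the problem of controlling the norm of a single operator.

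Next I would apply the absolute continuity hypothesis to the right-hand side. Using the $\sigma$-additivity of the scalar measure $\nu$, we get
\[
\Bigl\|F(\Delta)-\sum_{i=1}^{n}F(\Delta_i)\Bigr\|=\Bigl\|F\Bigl(\bigcup_{i=n+1}^{\infty}\Delta_i\Bigr)\Bigr\|\leq c\,\nu\Bigl(\bigcup_{i=n+1}^{\infty}\Delta_i\Bigr)=c\sum_{i=n+1}^{\infty}\nu(\Delta_i).
\]
Since $\nu$ is finite, $\sum_{i=1}^{\infty}\nu(\Delta_i)=\nu(\Delta)<\infty$, and therefore the tail $\sum_{i=n+1}^{\infty}\nu(\Delta_i)$ tends to zero as $n\to\infty$. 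This yields $\lim_{n\to\infty}\bigl\|F(\Delta)-\sum_{i=1}^{n}F(\Delta_i)\bigr\|=0$, which is precisely the uniform continuity of $F$.

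There is no substantive obstacle: the whole argument rests on the observation that the norm bound transfers countable additivity from the scalar measure $\nu$ to the operator-valued measure $F$ in the uniform topology. The only point that deserves explicit mention in the write-up is why $F$ is finitely additive to begin with, which is immediate from Definition~\ref{POV} (weak $\sigma$-additivity already implies finite additivity), and the appeal to the finiteness of $\nu$ to guarantee the vanishing tail.
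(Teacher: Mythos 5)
Your proof is correct and is essentially the paper's own argument: the paper writes the remainder as $F(\Delta-\Delta_n)$ for the partial unions $\Delta_n\uparrow\Delta$ and bounds its norm by $c\,\nu(\Delta-\Delta_n)\to 0$, which is exactly your tail estimate $c\sum_{i=n+1}^{\infty}\nu(\Delta_i)\to 0$ in different notation. No substantive difference.
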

\begin{proof}
Suppose $\Delta_i\uparrow\Delta$. We have 
\begin{align*}
\lim_{n\to\infty}\| F(\Delta)-F(\Delta_i)\|&=\lim_{n\to\infty}\| F(\Delta-\Delta_i)\|\\
&\leq c\lim_{n\to\infty}\nu(\Delta-\Delta_i)=0.
\end{align*} 
\noindent
which proves that $F$ is uniformly continuous.
\end{proof}
\begin{corollary}
Let $F$ be absolutely continuous with respect to a finite measure $\nu$. Then, $F$ is commutative if and only if there exist a self-adjoint operator $A$ and a strong Feller Markov kernel $\mu:\mathbb{R}\times\mathcal{B}(\mathbb{R})\to[0,1]$ such that:
\begin{equation}
F(\Delta)=\mu_{\Delta}(A),\quad\Delta\in\mathcal{B}(\mathbb{R})
\end{equation} 
\end{corollary}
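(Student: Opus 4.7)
The plan is to prove the corollary by combining Theorem \ref{abs} with Theorem \ref{uni}, handling the two implications separately.

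For the forward direction, I would assume $F$ is commutative and absolutely continuous with respect to the finite measure $\nu$. Theorem \ref{abs} immediately gives uniform continuity of $F$. With $F$ commutative and uniformly continuous, Theorem \ref{uni} produces a strong Feller Markov kernel $\mu$ satisfying $F(\Delta)=\int\mu_{\Delta}(\lambda)\,dE_{\lambda}$ where $E$ is the sharp reconstruction of $F$ coming from the self-adjoint operator $A$ (as constructed in Theorem \ref{weak}). Rewriting this integral using the functional calculus notation gives $F(\Delta)=\mu_{\Delta}(A)$, which is the desired representation. A small point to address is that $\mu$ is initially defined on $\sigma(A)\times\mathcal{B}(\mathbb{R})$; to fit the statement's form $\mu:\mathbb{R}\times\mathcal{B}(\mathbb{R})\to[0,1]$, I would extend $\mu_{(\cdot)}(\lambda)$ off $\sigma(A)$ by setting it equal to any fixed probability measure (say $\mu_{(\cdot)}(\lambda_0)$ for some $\lambda_0\in\sigma(A)$), which does not affect either the representation $F(\Delta)=\mu_{\Delta}(A)$ (because $E^A$ is supported on $\sigma(A)$) or the continuity on $\sigma(A)$.

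For the reverse direction, I would assume the existence of $A$ and a strong Feller Markov kernel $\mu$ with $F(\Delta)=\mu_{\Delta}(A)$. Then commutativity is immediate: for any $\Delta_1,\Delta_2\in\mathcal{B}(\mathbb{R})$, both $F(\Delta_1)=\mu_{\Delta_1}(A)$ and $F(\Delta_2)=\mu_{\Delta_2}(A)$ belong to the abelian von Neumann algebra generated by $A$ (the functional calculus of a single self-adjoint operator), so $[F(\Delta_1),F(\Delta_2)]=\mathbf{0}$. Note that this direction does not require absolute continuity at all.

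Since both implications follow almost formally from previously established results, there is no substantive obstacle here. The only mild subtlety is the domain-extension remark for the Markov kernel in the forward direction, which is cosmetic. The real mathematical content has already been absorbed by Theorems \ref{abs}, \ref{weak}, and \ref{uni}, so the corollary is essentially a packaging statement assembling them in the absolutely continuous setting.
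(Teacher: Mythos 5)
Your proposal is correct and follows the same route as the paper, which simply invokes Theorem \ref{abs} to get uniform continuity and then Theorem \ref{uni} to conclude; your additional remarks on the converse direction and on extending the kernel off $\sigma(A)$ are harmless elaborations of what the paper leaves implicit.
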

\begin{proof}
By theorem \ref{abs}, $F$ is uniformly continuous. Then, theorem \ref{uni} implies the thesis. 
\end{proof}

\begin{example}\label{PositionExample3}
Let us consider the unsharp position operator defined as follows.
\begin{align}
Q^f(\Delta)&:=\int_{[0,1]}\mu_{\Delta}(x)\,dQ_x,\quad\Delta\in\mathcal{B}(\mathbb{R}),\\
\mu_{\Delta}(x)&:=\int_{\mathbb{R}}\chi_{\Delta}(x-y)\, f(y)\,dy,\quad x\in[0,1]\notag
\end{align}
\noindent
where, $f$ is a positive, bounded, Borel function such that $f(x)=0$, $x\notin [0,1]$, 

$$\int_{[0,1]} f(x)dx=1,$$ 
\noindent
and $Q_x$ is the spectral measure corresponding to the position operator
\begin{align*}
Q:L^2([0,1])&\to L^2([0,1])\\
\psi(x)&\mapsto Q\psi:=x\psi(x)
\end{align*}
\noindent
$Q^f$ is absolutely continuous with respect to the measure 
$$\nu(\Delta)=M\int_{\Delta\cap[-1,1]}dx.$$ 
\noindent
Indeed, for each $\psi\in\mathcal{H}$, $|\psi|^2=1$,
\begin{align*}
\langle\psi,Q^f(\Delta)\psi\rangle=\int_{[0,1]}\mu_{\Delta}(x)\,\psi^2(x)\,dx\leq M\int_{\Delta\cap[-1,1]}dx
\end{align*}
\noindent
where, the inequality
$$\mu_{\Delta}(x)=\int_{\Delta} f(x-y)\,dy\leq M\int_{\Delta\cap[-1,1]} dx$$
\noindent
has been used.

\noindent
Therefore, by theorem \ref{abs}, $Q^f(\Delta)$ is uniformly continuous. 
\end{example}
\subsection{Unsharp Position Observable}
\noindent
In the present subsection, we study an important kind of absolutely continuous POV measures, the unsharp position observables obtained as the marginals of a covariant phase space observable. 

\noindent
In the following $\mathcal{H}=L^2(\mathbb{R})$, $Q$ and $P$ denote position and momentum observables respectively and $\ast$ denotes convolution, i.e. $(f\ast g)(x)=\int f(y)g(x-y)d y$. 

\noindent
Let us consider the joint position-momentum POV measure \cite{Ali,Bush,Davies,Guz,Holevo1,Prugovecki,Schroeck1,Stulpe} 
\begin{equation*} \label{phase}
F(\Delta\times\Delta')=\int_{\Delta\times\Delta'}U_{q,p}\,\gamma\,U^*_{q,p}\,d q\, d p
\end{equation*}
where, $U_{q,p}=e^{-iqP}e^{ipQ}$ and $\gamma=\vert f\rangle\langle f\vert$, $f\in L^2(\mathbb{R})$, $\|f\|_2=1$.
The marginal
\begin{equation}
\label{approximate}
Q^{f}(\Delta):=F(\Delta\times\mathbb{R})=\int_{-\infty}^{\infty}({\bf 1}_{\Delta}\ast\vert f\vert^2)(x)\,dQ_x,\quad\Delta\in\mathcal{B}(\mathbb{R}),
\end{equation}
is an unsharp position observable.  Notice that the map $\mu_{\Delta}(x):={\bf 1}_{\Delta}\ast \vert f(x)\vert^2$ defines a Markov kernel. 

\noindent
Moreover, $Q^f$ is absolutely continuous with respect to the Lebesgue measure. Indeed,
\begin{align*} 
Q^f(\Delta)=F(\Delta\times\mathbb{R})&=\int_{\Delta\times\mathbb{R}}U_{q,p}\,\gamma\,U^*_{q,p}\,d q\, d p\\
&=\int_{\Delta}\,dq\int_{\mathbb{R}} U_{q,p}\,\gamma\,U^*_{q,p}\,d p\\
&=\int_{\Delta}\widehat{Q}(q)\,dq\leq\int_{\Delta}\mathbf{1}\,dq
\end{align*}

\noindent
where, 
$$\widehat{Q}(q)=\int_{\mathbb{R}}U_{q,p}\,\gamma\,U^*_{q,p}\,dp.$$
\noindent
Although $Q^f$ is absolutely continuous with respect to the Lebesgue measure on $\mathbb{R}$, it is not uniformly continuous. That does not contradict theorem \ref{abs} since the Lebesgue measure on $\mathbb{R}$ is not finite. Anyway, $Q^f$ is uniformly continuous on each Borel set $\Delta$ with finite Lebesgue measure. 

\noindent
Now, we show that $Q^f$ is not in general uniformly continuous. We give the details of the following particular case.
\begin{example}[Optimal Phase Space Representation]\label{optimal}
\noindent
If we choose 
$$f^2(x)=\frac{1}{l\,\sqrt{2\,\pi}}\,e^{(-\frac{x^2}{2\,l^2})},\quad l\in\mathbb{R}-\{0\}.$$ 
\noindent
in (\ref{approximate}), we get an optimal phase space representation of quantum mechanics \cite{Prugovecki}. In this case,

\begin{align*}
Q^{f}(\Delta)&=\int_{-\infty}^{\infty}\Big(\int_{\Delta}\vert f(x-y)\vert^2)\,d y\Big)\,dQ_x\\
&=\frac{1}{l\,\sqrt{2\,\pi}}\int_{-\infty}^{\infty}\Big(\int_{\Delta}e^{-\frac{(x-y)^2}{2\,l^2}}\,d y\Big)\,dQ_x=\int_{-\infty}^{\infty}\mu_{\Delta}(x)\,dQ_x
\end{align*}
\noindent
where, 
\begin{equation}\label{PM}
\mu_{\Delta}(x)=\frac{1}{l\,\sqrt{2\,\pi}}\int_{\Delta}e^{-\frac{(x-y)^2}{2\,l^2}}\,d y 
\end{equation}
\noindent
defines a Markov kernel. 

In order to prove that $Q^f$ is not uniformly continuous we consider the family of sets $\Delta_i=(-\infty, a_i)$, $\lim_{i\to \infty}a_i=-\infty$ such that $\Delta_i\downarrow\emptyset$, and prove that $\lim_{i\to\infty}\|Q^f(\Delta_i)\|=1$. For each  $i\in\mathbb{N}$,
\begin{align*}
\lim_{x\to-\infty}\mu_{\Delta_i}(x)&=\lim_{x\to-\infty}\frac{1}{l\,\sqrt{2\,\pi}}\int_{\Delta_i}e^{-\frac{(x-y)^2}{2\,l^2}}\,d y\\
&=\lim_{x\to-\infty}\frac{1}{l\,\sqrt{2\,\pi}}\int_{(-\infty,\, a_i-x)}e^{-\frac{y^2}{2\,l^2}}\,d y=\frac{1}{l\,\sqrt{2\,\pi}}\int_{-\infty}^{\infty}e^{-\frac{y^2}{2\,l^2}}\,d y=1.  
\end{align*}
\noindent
Now, we prove that $\|F(\Delta_i)\|=1$, $i\in\mathbb{N}$. Indeed, if 
$$\psi_n=\chi_{[-n,-n+1]}(x),$$
\begin{align}\label{n}
\lim_{n\to\infty}\langle\psi_n,Q^f(\Delta_i)\psi_n\rangle&=\lim_{n\to\infty}\int_{-\infty}^{\infty}\mu_{\Delta_i}(x)\vert\psi_n(x)\vert^2\,dx\\
&=\lim_{n\to\infty}\int_{[-n,-n+1]}\mu_{\Delta_i}(x)\,dx=1.
\end{align} 
\noindent
Since, for each $\Delta\in\mathcal{B}(\mathbb{R})$, $\|Q^f(\Delta)\|\leq 1$, equation (\ref{n}) implies that $\|Q^f(\Delta_i)\|=1$, for each $i\in\mathbb{N}$. Hence,  $\lim_{i\to\infty}\|Q^f(\Delta_i)\|=1$ and $Q^f$ cannot be uniformly continuous.

It is worth noticing that although $Q^f$ is not uniformly continuous, $\mu_{\Delta}$ is continuous for each interval $\Delta\in\mathcal{B}(\mathbb{R})$. Indeed, 
\begin{align*}
\vert\mu_{\Delta}(x)-\mu_{\Delta}(x')\vert&=\frac{1}{l\,\sqrt{2\,\pi}}\Big\vert \int_{\Delta}e^{-\frac{(x-y)^2}{2\,l^2}}\,dy-\int_{\Delta}e^{-\frac{(x'-y)^2}{2\,l^2}}\,dy \Big\vert\\
&=\frac{1}{l\,\sqrt{2\,\pi}}\Big\vert\int_{\Delta_x}e^{-\frac{(y)^2}{2\,l^2}}-\int_{\Delta_{x'}} e^{-\frac{(y)^2}{2\,l^2}}\,dy\Big\vert\leq\frac{1}{l\,\sqrt{2\,\pi}}\Big\vert\int_{\overline{\Delta}} e^{-\frac{(y)^2}{2\,l^2}}\,dy\Big\vert
\end{align*}
\noindent
where, 
$$\Delta_x=\{z\in\mathbb{R}\,\vert\,z=y-x,\,y\in\Delta\},\quad\Delta_{x'}=\{z\in\mathbb{R}\,\vert\,z=y-x',\,y\in\Delta\}$$ 
\noindent
and,
$$\overline{\Delta}=(\Delta_x-\Delta_{x'})\cup(\Delta_{x'}-\Delta_x).$$

\noindent
Therefore, $\vert x-x'\vert\leq\epsilon$ implies,
\begin{equation*}
\vert\mu_{\Delta}(x)-\mu_{\Delta}(x')\vert\leq\frac{1}{l\,\sqrt{2\,\pi}}\Big\vert\int_{\overline{\Delta}} e^{-\frac{(y)^2}{2\,l^2}}\,dy\Big\vert\leq\frac{1}{l\,\sqrt{2\,\pi}}\,\int_{\overline{\Delta}}\,dy=\frac{\sqrt{2}}{l\,\sqrt{\pi}}\,\epsilon.
\end{equation*}
\end{example}

\section*{Appendices}
\appendix
\renewcommand{\thetheorem}{A\arabic{theorem}}
\setcounter{theorem}{0}

\section{$\mathcal{A}^W(F)$ coincides with the von Neumann algebra generated by $\{F(\Delta)\}_{\Delta\in\mathcal{R(S)}}$}
\noindent
We recall that $\mathcal{S}\subset\mathcal{B}(\mathbb{R})$ is the countable family of open intervals with rational end-points and $\mathcal{R(S)}$ the ring generated by $\mathcal{R}$. Theorem c, page 24, in Ref. \cite{Loeve} ensures the countability of $\mathcal{R(S)}$.   
\begin{proof}
\noindent
Let $\overline{\mathbb{R}}$ be the extended real line, $M:=\{F(\Delta)\}_{\Delta\in\mathcal{B}(\mathbb{R})}$, and $\mathcal{A}^W(F)=\mathcal{A}^W(M)$ the von Neumann algebra generated by $F$. Let $G$ denote the family of open subsets of $\mathbb{R}$ and $O:=\{F(\Delta),\,\,\Delta\in G\}$. Since the POV measure $F$ is regular, for each Borel set $\Delta$, there exists a decreasing family of open sets $G_i$ such that $F(G_i)\to F(\Delta)$ strongly. Then, $O$ is dense in $M$ and the von Neumann algebra generated by $M$ coincides with the von Neumann algebra generated by $O$. Hence, 
\begin{equation}\label{O}
\mathcal{A}^W(F)=\mathcal{A}^W(M)=\mathcal{A}^W(O).
\end{equation}
\noindent
Now, let $G_1$ denote the family of open intervals in $\mathbb{R}$. Let us consider the set $O_1=\{F(\Delta),\,\,\Delta\in G_1\}$. Each open set $\Delta$ is the disjoint union of a countable family of open intervals $\Delta_i$, i.e. $\Delta=\cup_{i=1}^{\infty}\Delta_i$. Therefore,
\begin{align*}
F(\Delta)&=F(\cup_{i=1}^{\infty}\Delta_i)=\sum_{i=1}^{\infty}F(\Delta_i)\\
&=\lim_{n\to\infty}\sum_{i=1}^n F(\Delta_i)=\lim_{n\to\infty}F(\cup_{i=1}^n\Delta_i). 
\end{align*}
Since the von Neumann algebra generated by $O_1$ contains $F(\cup_{i=1}^n\Delta_i)$, it must contain $F(\Delta)=\lim_{n\to\infty}F(\cup_{i=1}^n\Delta_i)$. Therefore, 
\begin{equation}\label{O1}
\mathcal{A}^W(O_1)=\mathcal{A}^W(O).
\end{equation}
\noindent
 Now, we prove that the von Neumann algebra $\mathcal{A}^W(O_2)$ generated by $O_2=\{F(\Delta)\}_{\Delta\in\mathcal{R(S)}}$ coincides with $\mathcal{A}^W(O_1)$.

\noindent
For each open interval $(a,b)$, $a,b\in\overline{\mathbb{R}}$, there exists a disjoint family of sets $\{\Delta_i\}_{i\in\mathbb{N}}\subset \mathcal{R(S)}$, $\Delta_i\subset (a,b)$, $i\in\mathbb{N}$, such that $(a,b)=\cup_{i=1}^{\infty}\Delta_i$. Then, 
\begin{align*}
F(a,b)&=F(\cup_{i=1}^{\infty}\Delta_i)=\sum_{i=1}^{\infty}F(\Delta_i)\\
&=\lim_{n\to\infty}\sum_{i=1}^n F(\Delta_i)=\lim_{n\to\infty}F(\cup_{i=1}^n\Delta_i). 
\end{align*}
Since the von Neumann algebra generated by $O_2$ contains $F(\cup_{i=1}^n\Delta_i)$ for each $n\in\mathbb{N}$, it must contain $F(\Delta)=\lim_{n\to\infty}F(\cup_{i=1}^n\Delta_i)$. Therefore, $\mathcal{A}^W(O_1)=\mathcal{A}^W(O_2)$ and, by equations (\ref{O}) and (\ref{O1}),
\begin{equation}\label{W}
\mathcal{A}^W(O_2)=\mathcal{A}^W(O_1)=\mathcal{A}^W(O)=\mathcal{A}^W(F)
\end{equation}
\noindent
which proves that $\mathcal{A}^W(F)$ coincides with the von Neumann algebra generated by the set $\{F(\Delta)\}_{\Delta\in\mathcal{R(S)}}$.

\end{proof}

\section{Sequences of continuous functions}
\renewcommand{\thetheorem}{B\arabic{theorem}}
\setcounter{theorem}{0}
\noindent
The following theorem is due to Dini. We give a proof based on the use of sequences.
\begin{theorem}\label{lem7} 
Let $\{f_n(\lambda)\}_{n\in\mathbb{N}}$ be a non increasing sequence of continuous functions defined on a compact set $B\subset[0,1]$ with values in $[0,1]$ and such that $f_n(\lambda)\to 0$ point-wise. Then, $f_n(\lambda)\to 0$ uniformly.
\end{theorem}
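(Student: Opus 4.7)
The plan is to argue by contradiction, exploiting the compactness of $B$ via Bolzano--Weierstrass together with the monotonicity and continuity hypotheses. This is the natural ``sequential'' route alluded to in the statement.

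Suppose the convergence fails to be uniform. Then there is some $\epsilon>0$ such that for every $N\in\mathbb{N}$ one can find $n\geq N$ and $\lambda_n\in B$ with $f_n(\lambda_n)\geq\epsilon$. Extracting along such indices, I would produce a subsequence $\{n_k\}$ with $n_k\to\infty$ and points $\lambda_{n_k}\in B$ satisfying $f_{n_k}(\lambda_{n_k})\geq\epsilon$. Because $B\subset[0,1]$ is compact, Bolzano--Weierstrass yields a further subsequence (which I will relabel $\{\lambda_{n_k}\}$ for simplicity) converging to some $\lambda_0\in B$.

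The key step is then to use the monotonicity. For any fixed index $m\in\mathbb{N}$, once $k$ is large enough we have $n_k\geq m$, and hence by the non-increasing property
\begin{equation*}
\epsilon\leq f_{n_k}(\lambda_{n_k})\leq f_m(\lambda_{n_k}).
\end{equation*}
Letting $k\to\infty$ and invoking the continuity of $f_m$ gives $\epsilon\leq f_m(\lambda_0)$. Since $m$ was arbitrary, this contradicts the pointwise convergence $f_m(\lambda_0)\to 0$ at the limit point $\lambda_0$, completing the argument.

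The only potentially subtle step is the transition from $f_{n_k}(\lambda_{n_k})$ to $f_m(\lambda_{n_k})$: one must be careful to fix $m$ first and then send $k\to\infty$, so that continuity of the single function $f_m$ can be applied at $\lambda_0$. No uniform control on the $f_n$ is needed at that stage, which is precisely why the argument succeeds without assuming equicontinuity.
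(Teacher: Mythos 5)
Your proof is correct and is essentially the same sequential-compactness argument as the paper's: both extract, via Bolzano--Weierstrass, a convergent subsequence of points $\lambda_{n_k}$ at which the functions stay bounded below by a positive constant, and both use the monotonicity together with the continuity of a single fixed $f_m$ to contradict pointwise convergence at the limit point. Your version is slightly more streamlined (you negate uniform convergence directly and pass to the limit in $f_m(\lambda_{n_k})$, whereas the paper works with the attained suprema $\|f_n\|_\infty$ and a nested neighborhood contradiction), but the underlying mechanism is identical.
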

\begin{proof}
Since $f_{n+1}(\lambda)\leq f_n(\lambda)$ for each $\lambda\in B$, we have $\|f_{n+1}\|_{\infty}\leq\|f_{n}\|_{\infty}$. If $\|f_{n}\|_{\infty}\to 0$ clearly $f_n(\lambda)\to 0$ uniformly.

\noindent 
Then, suppose $\|f_{n}\|_{\infty}\to a>0$. Since $\|f_{n+1}\|_{\infty}\leq\|f_{n}\|_{\infty}$, we have $\|f_{n}\|_{\infty}\geq a$, for each $n\in\mathbb{N}$.

\noindent
 Let $\lambda_n$ be such that $f_n(\lambda_n)=\|f_n\|_{\infty}$. Since $\{\lambda_n\}$ is a bounded sequence of real numbers, there exists a convergent subsequence $\{\lambda_{n_k}\}_{k\in\mathbb{N}}$. Let $\beta$ be its limit, i.e., $\beta:=\lim_{k\to\infty}\lambda_{n_k}$. The compactness of $B$ assures that $\beta\in B$. Moreover, $\lim_{k\to\infty}f_{n_k}(\lambda_{n_k})=a$. 

\noindent
Let us consider the sequence of numbers $f_{n_k}(\beta)$. We prove that $f_{n_k}(\beta)\geq a$ for each $k\in\mathbb{N}$. We proceed by contradiction. Suppose that there exists $\bar{k}\in\mathbb{N}$ such that $f_{n_{\bar{k}}}(\beta)<a$. Then, there exists a neighborhood $I(\beta)$ of $\beta$ such that $f_{n_{\bar{k}}}(\lambda)<a$ for each $\lambda\in I(\beta)$. Moreover, since $\lambda_{n_k}\to \beta$, there exists $l\in\mathbb{N}$ such that $k>l$ implies $\lambda_{n_k}\in I(\beta)$. Take $k>\max\{\bar{k},l\}$. Then, $\lambda_{n_k}\in I(\beta)$ and $f_{n_k}(\lambda)\leq f_{n_{\bar{k}}}(\lambda)$, for each $\lambda\in B$. Therefore,  
$$f_{n_k}(\lambda_{n_k})\leq f_{n_{\bar{k}}}(\lambda_{n_k})<a$$
which contradicts the fact that $f_{n_k}(\lambda_{n_k})=\|f_{n_k}\|_{\infty}\geq a$, for each $k\in\mathbb{N}$.

\noindent
We have proved that $f_{n_k}(\beta)\geq a$, for each $k\in\mathbb{N}$. This implies that $\lim_{k\to\infty}f_{n_k}(\beta)\geq a$ and contradicts one of the hypothesis of the lemma, i.e., $\lim_{n\to\infty}f_n(\lambda)=0$ for each $\lambda\in B$.
\end{proof}

\newpage
\bibliographystyle{model1-num-names}


%
\end{document}